\documentclass[11pt,reqno]{amsart}
\usepackage[letterpaper,margin=1.05in,headheight=14pt]{geometry}
\usepackage[T1]{fontenc}
\usepackage{comment,lmodern}
\usepackage{amsmath,amssymb,amsthm,mathrsfs,mathtools}
\usepackage{enumitem}
\usepackage{microtype}
\usepackage{xcolor}
\usepackage{tikz}
\usepackage[section]{placeins}
\definecolor{linknavy}{HTML}{17365D}
\definecolor{citegreen}{HTML}{1E5B45}
\usepackage[
  colorlinks=true,
  linkcolor=linknavy,
  citecolor=citegreen,
  urlcolor=linknavy,
  pdfauthor={Levi Segal},
  pdftitle={A polynomial circumference bound for tough graphs},
  pdfsubject={Toughness and long cycles in graphs},
  pdfkeywords={graph toughness, circumference, long cycles, Hamiltonicity}
]{hyperref}
\usepackage[capitalise,noabbrev]{cleveref}
\numberwithin{equation}{section}
\setlist{leftmargin=2.1em,itemsep=0.25em,topsep=0.45em}
\allowdisplaybreaks[2]
\newtheorem{theorem}{Theorem}[section]
\newtheorem{conjecture}[theorem]{Conjecture}
\newtheorem{proposition}[theorem]{Proposition}
\newtheorem{corollary}[theorem]{Corollary}
\newtheorem{lemma}[theorem]{Lemma}
\theoremstyle{definition}

\theoremstyle{remark}

\crefname{theorem}{Theorem}{Theorems}
\Crefname{theorem}{Theorem}{Theorems}
\crefname{proposition}{Proposition}{Propositions}
\Crefname{proposition}{Proposition}{Propositions}
\crefname{corollary}{Corollary}{Corollaries}
\Crefname{corollary}{Corollary}{Corollaries}
\crefname{lemma}{Lemma}{Lemmas}
\Crefname{lemma}{Lemma}{Lemmas}
\crefname{definition}{Definition}{Definitions}
\Crefname{definition}{Definition}{Definitions}
\crefname{section}{Section}{Sections}
\Crefname{section}{Section}{Sections}
\crefname{equation}{Equation}{Equations}
\Crefname{equation}{Equation}{Equations}

\crefname{conjecture}{Conjecture}{Conjectures}
\Crefname{conjecture}{Conjecture}{Conjectures}
\DeclareMathOperator{\circum}{circ}
\DeclareMathOperator{\rank}{rank}
\newcommand{\CC}{\mathcal C}

\makeatletter
\def\thm@space@setup{%
  \thm@preskip=12pt plus 2pt minus 2pt
  \thm@postskip=12pt plus 2pt minus 2pt
}
\title[Polynomial length cycles in tough graphs]
{Polynomial length cycles in tough graphs}
\author{Levi Segal}
\address{Stanford Online High School, Redwood City, California, USA}
\email{levisegal0@gmail.com}
\date{}
\author{Jacques Verstraete}
\address[Jacques Verstraete]{Department of Mathematics, University of California, San Diego}
\email{jacques@ucsd.edu}
\date{}

\subjclass[2020]{Primary 05C38; Secondary 05C40, 05C45}
\keywords{graph toughness, circumference, long cycles, Hamiltonicity}

\begin{document}
\begin{abstract}
 We prove that every $n$-vertex $15$-tough graph contains a cycle of length $\Omega(n^{1/20})$. 
\end{abstract}
\maketitle
\raggedbottom

\section*{AI Declaration Statement}

The authors used GPT-5.6 for suggestions on proof ideas and approaches, optimization of constants, exploratory computation, literature recommendations, and assistance with drafting and revising the manuscript. The authors' initial argument gave a lower bound of order $\exp(\sqrt{\log n})$. GPT-5.6 suggested using weighted sums to control the losses arising when cycles are replaced, which allowed the authors to find new ways to increase the bound and which led to a polynomial bound of $cn^{1/70}$. After this bound, the authors requested GPT-5.6 to optimize the arguments and constants used, and from further suggestions and work from GPT-5.6 led to the exponent $9/176$ obtained here. In particular, other than constants optimization, GPT 5.6 switched to consider $k$ paths between $2$ cycles to join them and get larger cycles rather than just $3$ for the first few values of $L_i$ which originally gave the $1/70$ bound and in larger and more tough graphs with more iterations this allowed the exponent to be improved while maintaining the overall argument and method of the original bound.  GPT-6 Astra was also used for writing and revision. The authors takes responsibility for the mathematical arguments and the final text.

\section{Introduction}\label{sec:intro}

\begin{comment}
A Hamiltonian cycle is a cycle containing every vertex of a graph and deleting a nonempty set of vertices from such a cycle leaves at most as many components as vertices deleted. Chv\'atal \cite{Chvatal} introduced toughness in 1973 to express this necessary condition for Hamiltonicity.
\end{comment}

Chv\'atal \cite{Chvatal} defined the \emph{toughness} of a non-complete graph $G$ by 
\[ \tau(G) = \min_{{S \subseteq V(G)}\atop{\omega(G - S) \geq 2}} \frac{|S|}{\omega(G - S)}\]
where $\omega(G)$ is the number of components of $G$. Complete graphs are defined to have infinite toughness.
A graph $G$ is \emph{$t$-tough} if $\tau(G) \geq t$.
 Chv\'atal \cite{Chvatal} introduced toughness in relation to 
Hamiltonian graphs, noting that every Hamiltonian graph is $1$-tough. The following well-known conjecture was proposed by Chv\'{a}tal  \cite{Chvatal}:

\begin{samepage}
\begin{conjecture}\label{conj:chvatal}
There exists a constant $t_0$ such that every $t_0$-tough graph on at least three vertices is Hamiltonian.
\end{conjecture}
\end{samepage}

The conjecture remains open \cite{WangFractional}, and Chv\'a tal originally proposed $t_0 = 2$. Enomoto, Jackson, Katerinis, and Saito \cite{EJKS} proved the following. A\emph{$2$-factor} of a graph is spanning subgraph in which every vertex has degree two -- equivalently a collection of disjoint cycles covering all vertices. 

\begin{theorem}\label{thm:2factor}
Every 2-tough graph with at least three vertices has a 2-factor.
\end{theorem}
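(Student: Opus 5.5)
We will derive \cref{thm:2factor} from Tutte's $f$-factor theorem with $f\equiv 2$. For disjoint sets $S,T\subseteq V(G)$, let $h(S,T)$ denote the number of components $C$ of $G-(S\cup T)$ with $e(C,T)$ odd. Tutte's theorem says that $G$ has a $2$-factor if and only if, for all disjoint $S,T$,
\[
\delta(S,T) := 2|S| - 2|T| + \sum_{v\in T} d_{G-S}(v) - h(S,T) \ \ge\ 0 .
\]
So we fix a $2$-tough graph $G$ on $n\ge 3$ vertices. We choose a pair $(S,T)$ with $\delta(S,T)<0$, and among all such pairs we take one minimizing $|T|$. The goal is to derive a contradiction. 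If $G$ is complete, it is Hamiltonian and we are done, so we may assume $G$ is non-complete.

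The first step is to normalize $T$ using minimality. Moving a vertex $v\in T$ with $d_{G-S}(v)\ge 2$ out of $T$ changes $\sum d$ by $-d_{G-S}(v)$ on $v$ and by $-e(v,T)$ on its $T$-neighbours. It changes $-2|T|$ by $+2$. It also changes $h$ by at most $d_{G-S}(v)$ or so. A parity-aware count of this kind shows that we may assume $T$ is independent in $G-S$ and that every $v\in T$ has $d_{G-S}(v)\le 1$. Vertices of degree $0$ into $G-S$ are only easier to handle. Let $T_0$ and $T_1$ be the vertices of $T$ with $d_{G-S}=0$ and $d_{G-S}=1$ respectively. Then $\sum_T d = |T_1|$, and the negativity condition becomes
\[
h + 2|T_0| + |T_1| > 2|S| .
\]

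The second step is to turn this into a cutset that contradicts $2$-toughness. Each odd component $C$ counted by $h$ sends an edge to $T_1$. Each vertex of $T_1$ has exactly one edge to $G-S$ outside $T$, and it lies in a single component. Consider deleting $S$ together with, for each vertex $v \in T_1$ that is attached to a component containing several $T_1$-vertices, its unique neighbour $u_v$. More economically, we group the $T_1$-vertices by the component they attach to. A component $C$ together with its attached $T_1$-vertices $v_1,\dots,v_k$ ($k\ge1$) can be split by deleting the at most $k$ attachment neighbours. This isolates the $v_i$, so it produces at least $k$ components (plus possibly the rest of $C$) at a cost of at most $k$ deleted vertices. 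Vertices of $T_0$ are already isolated components of $G-S$.

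The resulting set is $X = S\cup\{\text{attachment vertices}\}$. Then $G-X$ has at least $|T_0| + |T_1| + (\text{number of components }C)$ components, while $|X|\le |S|+|T_1|$. We treat the case $|T_1|$ large and the case of many components separately. Using $h\le \#\text{components}$ and the inequality above, this gives $\omega(G-X)\ge |X|/2$ with strict inequality, provided $\omega(G-X)\ge 2$. The cases where $\omega(G-X)\le 1$ force $|S|$ and $|T|$ tiny, and we check those directly using $n\ge 3$ and $2$-toughness, which gives minimum degree $\ge 4$.

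We expect the main obstacle to be the bookkeeping in this second step. An attachment vertex may be shared, and a component $C$ with $e(C,T)$ odd may have only one attached vertex whose neighbour is all of $C$. In that case the trade-off between $|X|$ and $\omega$ is exactly tight, and the factor $2$ in toughness must absorb the term $2|T_0|+|T_1|$. To handle this we would first try to choose the removed vertices more cleverly. In the tight case we would remove the $T_1$-vertex itself rather than its neighbour.
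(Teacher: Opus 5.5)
\textbf{Gap: the normalization in your first step is false, and it is where the difficulty lies.} The paper does not prove this statement; it quotes it from \cite{EJKS}. Your framework (Tutte's $f$-factor criterion with $f\equiv 2$, a barrier $(S,T)$ with $|T|$ minimal, a toughness contradiction) is the standard route.

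When $v$ leaves $T$, the degrees $d_{G-S}(u)$ of the other vertices do not change, since they are measured in $G-S$. Also, $h$ drops by at most the number of components of $G-S-T$ adjacent to $v$. This gives $\delta(S,T-v)\le\delta(S,T)+2-e(v,T)$, where $e(v,T)$ is the number of neighbours of $v$ in $T$. Since $\delta(S,T)$ is always even, so $\delta(S,T)\le -2$, minimality does force $T$ to be independent. It says nothing about $d_{G-S}(v)$, however.

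Suppose $v$ sends its $d\ge 2$ edges into $d$ distinct odd components, one edge each. These merge with $v$ into one even component, so $h$ drops by exactly $d$ and $\delta$ rises by exactly $2$, which destroys a barrier with $\delta=-2$. For instance, in $K_{1,3}$ the pair $S=\emptyset$, $T=\{\text{centre}\}$ is a barrier with $|T|$ minimum and $d_{G-S}=3$.

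Under your normalization the second step would be immediate. Then $h\le|T_1|$ gives $|S|\le|T|-1$, and $X=S\cup N_{G-S}(T_1)$ isolates every vertex of $T$ with $|X|\le 2|T|-1$. So the bookkeeping you anticipate is not the real obstacle. The real obstacle is the vertices of $T$ with $d_{G-S}\ge 2$, which your proposal never treats.

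\textbf{How to handle them, using only the independence of $T$.} Put $D=V(G)\setminus(S\cup T)$, $A=N_{G-S}(T)\subseteq D$, $e=e(T,D)$, and let $d_T(a)$ be the number of neighbours of $a$ in $T$. The barrier then reads $2|T|\ge 2|S|+2+e-h$.

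In each odd component of $G[D]$ that contains a vertex of $A$ with exactly one neighbour in $T$, keep one such vertex. Let $K$ be the set of kept vertices and put $X=S\cup(A\setminus K)$.

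A kept vertex is the only surviving attachment vertex of its component, and it has a unique neighbour in $T$. Hence distinct vertices of $T$ lie in distinct components of $G-X$, and $\omega(G-X)\ge|T|$.

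Next, $|A|=e-\sum_{a\in A}(d_T(a)-1)$. Each odd component contributes at least $1$ to $|K|+\sum_{a\in A}(d_T(a)-1)$: it meets $A$, and either it supplies a kept vertex or all its attachment vertices have at least two $T$-neighbours. Therefore
\[
2\omega(G-X)-|X|\ \ge\ 2|T|-|S|-|A|+|K|\ \ge\ |S|+2,
\]
which contradicts $2$-toughness when $|T|\ge 2$.

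If $|T|=1$, the barrier together with $e\ge h$ forces $S=\emptyset$ and $e=h$. Then the unique vertex $v\in T$ separates $G$ into $d_G(v)\ge 4$ components, which is again impossible.
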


Their threshold is also tight: for every $0<\varepsilon<2$, one can show there are $(2-\varepsilon)$-tough graphs without a $2$-factor. The Hamiltonicity conjecture at toughness two was eventually disproved by Bauer, Broersma, and Veldman \cite{BBV}, who constructed, for every $0<\varepsilon<9/4$, a $(9/4-\varepsilon)$-tough graph with no Hamiltonian path, meaning no path containing every vertex.  We refer  the  reader to the survey of Bauer, Broersma, and Schmeichel \cite{ToughnessSurvey} for a detailed account of the early developments.

In this paper, we consider the length of a longest cycle when the toughness is fixed. The \emph{circumference} $\circum(G)$ is the length of a longest cycle in $G$, with $\circum(G)=0$ if $G$ has no cycle. Broersma, van den Heuvel, Jung, and Veldman \cite{BHVJ} made the following conjecture (see also Conjecture~2 on page~10 of the survey \cite{ToughnessSurvey}): 

\begin{conjecture}\label{conj-main}
    For each $t > 0$, there exists $c_t > 0$ such that every 2-connected $t$-tough $n$-vertex graph contains a cycle of length at least $c_t\log n$. 
\end{conjecture}

This conjecture was recently proved in~\cite{SV} with $c_t \geq 2/\log(1 + 1/t) - o(1)$ as $n \rightarrow \infty$, and it is shown in~\cite{SV} that $c_1 \leq 6/\log 3$ and $c_t = (2 + o(1))t$ as $t \downarrow 0$. Toughness has also been used together with minimum degree to bound the circumference. Writing $\delta(G)$ for the smallest degree of a vertex, Jung and Wittmann \cite{JungWittmann} proved that a noncomplete $2$-connected graph of toughness $t$ satisfies
\[
 \circum(G)\ge\min\{n,(t+1)\delta(G)+t\}.
\]
In particular, since $\delta(G) \geq 2t$, this shows that for $n \geq 3$, $n$-vertex graphs of toughness roughly at least $\sqrt{n/2}$ are hamiltonian. At toughness two, a recent construction of Lesniak and Schmeichel \cite{LesniakSchmeichel} shows that a longest cycle can miss a positive proportion of the vertices: for every $\varepsilon>0$, they give a $2$-tough graph $G$ with
\[
 \circum(G)<\left(\frac12+\varepsilon\right)|V(G)|.
\]
A polynomial bound lower bound on the circumference holds for 3-connected graphs with bounded maximum degree, as shown by Jackson and Wormald~\cite{JW} (see also  Liu, Yu, and Zhang \cite{LiuYuZhang} and Chen and Yu~\cite{CY} for results on 3-connected cubic graphs). In this paper, we prove a polynomial lower bound for every fixed toughness greater than $14$:

\begin{samepage}
\begin{theorem}\label{thm:main}
Every $n$-vertex $15$-tough graph $G$ has $\circum(G) = \Omega(n^{9/176})$ as $n \rightarrow \infty$.
\end{theorem}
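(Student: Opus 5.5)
We will start from a $2$-factor supplied by \cref{thm:2factor}: since $G$ is $15$-tough it is in particular $2$-tough, so (for $n\ge 3$) $V(G)$ is covered by vertex-disjoint cycles $\CC_0$. If some cycle has length at least $n^{9/176}$ we are done. Otherwise every cycle is short, and we will run an iterative merging procedure. We fix thresholds $L_0<L_1<\dots<L_m$ with $L_{i+1}\approx L_i^{\alpha}$ for a suitable $\alpha>1$, and at stage $i$ we maintain a family $\CC_i$ of vertex-disjoint cycles, each of length at least $L_i$, together with a weight function. The weight of a vertex is $1$ if it is covered by $\CC_i$. Vertices that were discarded when cycles were replaced receive a geometrically decaying weight, so that the total weight stays $\Omega(n)$ throughout.

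The key step is a merging lemma. Consider a family of short cycles covering most of the weight, and contract each cycle $C$ of length at most $L_{i+1}$ to a single vertex. Toughness then controls the number of components. Deleting the vertex set $S$ of a collection of cycles whose total length is $s$ leaves at most $s/15$ components. From this we will deduce that the auxiliary ``cycle graph'' $H_i$ has large expansion. Concretely, there is a cycle $C$ with many disjoint paths (we aim for $k$ paths, as in the acknowledgement) to another cycle $C'$, and these paths avoid all other cycles except through short detours. Given $k$ internally disjoint $C$--$C'$ paths, two of them have endpoints splitting $C$ and $C'$ favourably. Choosing these two, we obtain a cycle through at least half of $C$ and half of $C'$. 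More generally, we chain cycles along a long path in $H_i$ to get a cycle of length roughly the sum of the halves. We then replace the used cycles by the new long cycle. The vertices lost in this replacement are charged to the weight function, which is exactly where the weighted sums are used to bound the losses.

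The counting will go as follows. The toughness bound $\omega(G-S)\le |S|/15$ is applied with $S$ equal to the union of a set of cycles plus the connecting paths. It shows that if fewer than a $\delta$-fraction of the cycles could be merged, then removing the mergeable ones would leave too many components, namely the isolated short cycles. Hence a constant fraction of the weight is upgraded to cycles of length at least $L_{i+1}$ at each stage, while the number of stages needed before cycles reach length $n^{9/176}$ is $O(\log\log n)$. Alternatively, if $\alpha$ is bounded, the number of stages is polynomially controlled.

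Balancing the weight loss per stage, of the form $(1-\delta)$ per stage, against the growth $L_{i+1}=L_i^{\alpha}$ will yield the exponent. The constant $15$ is chosen so that the component count beats the loss from path lengths at every stage. The bound is in fact stronger: $n^{9/176}\ll n^{1/20}$ from the abstract is interpreted consistently, since $9/176>1/20$.

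The main obstacle is the merging lemma. We need many disjoint connecting paths between cycles that are themselves short, so that the merged cycle length is not dominated by connectors and the connectors do not destroy too many other cycles. To handle this we would first try a BFS/Menger argument in $G$ with the union of small cycles as terminals. Toughness then gives that any separator between two cycle-groups has size at least $15$ times the number of groups separated, and we would take $k$ depending on the stage.
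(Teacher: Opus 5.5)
Your proposal has a genuine gap. The merging lemma, which you leave open, carries the whole difficulty, and the counting step built on it does not follow from toughness. Toughness bounds $\omega(G-S)$ by $|S|/15$, which is a count of \emph{vertices}, whereas you need to bound a number of \emph{cycles}. Your connecting paths are arbitrary paths in $G$, so they may be long and may run through many other cycles of the family, destroying them. A merged cycle has up to $F=\circum(G)$ vertices and can meet up to $F$ other cycles. What Menger plus toughness honestly gives is therefore that the number of merged cycles is only about $q/F$, not a constant fraction of $q$. This is exactly the paper's \cref{lem:warmup}, where each round costs a factor $D_{t,k}F$.

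With that loss only boundedly many stages are affordable, so an $O(\log\log n)$-stage scheme is out of reach. Nothing in your scheme is specific to $n^{9/176}$: retaining a constant fraction over $O(\log\log n)$ stages would give $\circum(G)\ge n^{1-o(1)}$. That signals the constant-fraction claim is not available. The growth $L_{i+1}\approx L_i^{\alpha}$ is also unsupported. Keeping half of each of two cycles of length $L_i$ only guarantees length $L_i+2$. Even the $k$-path merges of \cref{lem:linklength} only grow lengths by a factor of about $31/20$, as in \eqref{eq:lengthsequence}. Chaining along a path of $H_i$ does not close up into a cycle. Giving discarded vertices decaying weight does not help either, since the quantity to control is the total length of the cycles you still hold.

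The paper supplies the missing ideas in two phases.

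First, it runs only $r_{15}=8$ lossy Menger rounds, merging pairs through $k_i$ disjoint paths, until every cycle has at least $B_{15}=210$ vertices. This leaves $q_0\ge n/(2A_{15}F^{9})$ cycles, unless $F$ is already large (\cref{prop:initial}).

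Second, for cycles this long, Mader's $\mathcal T$-path theorem (a transversal of size at most $2\nu$) combined with toughness, together with Rado's theorem in the Mader matroid, gives a packing of \emph{transitions} (\cref{lem:ports}). These are disjoint paths whose interiors avoid every current cycle, with each cycle meeting at least seven of them. This is precisely what removes the collateral damage you identified as the obstacle. It needs the length threshold, because otherwise the transversal could swallow whole short cycles. In the auxiliary multigraph the paper then does one of two things:
\begin{itemize}
\item it replaces a maximal family of disjoint thetas, averaging over the omitted branch (\cref{lem:theta}); or
\item it smooths the remaining cactus through host cycles using three ports per cycle, so that two thirds of each cycle survives (\cref{lem:smoothing}).
\end{itemize}
Your two-port ``half of each cycle'' merge would not work here. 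Merging two cycles of length $L$ into one of length about $L$ leaves the average length unchanged, which is incompatible with any fixed $\rho<1$ once $L$ is large.

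There is no length schedule at all. The invariant is that $M(\CC)\,|\CC|^{-\rho}$ with $\rho=86/95$ never decreases (\cref{prop:round}). This telescopes to $F\ge L_{r_t}q_0^{1-\rho}$ (\cref{prop:iterate}). Combined with $q_0\gtrsim n/F^{9}$, it gives exactly $\delta=(1-\rho)/(1+9(1-\rho))=9/176$. Your proposal contains neither the cycle-avoiding transition packing nor a loss-controlled replacement rule, and $9/176$ is never actually derived from it.
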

\end{samepage}

No analogous polynomial lower bound had previously been obtained for arbitrary graphs from a fixed toughness assumption alone. An explicit exponent for general $t>14$ is given in \cref{sec:iteration}. For $t\ge 266/5$, the exponent improves to $9/131$, as stated in \cref{cor:larget}. We propose the following conjecture:

\begin{conjecture}
For each $t > 1$, there exists $\delta_t > 0$ such that every $n$-vertex $t$-tough graph contains a cycle of length $n^{\delta_t}$.
\end{conjecture}

Perhaps it is true that $\delta_t \rightarrow 1$ as $t \rightarrow \infty$, which is strongly suggested by Conjecture \ref{conj:chvatal}; from the proof of Theorem \ref{thm:main} one only obtains $\delta_t \geq 9/131$ for $t \geq 266/5$. A similar proof gives 
a cycle of length $\exp(\Omega(\sqrt{\log n}))$ in every $t$-tough graph with $t > 2$.

\bigskip

\section{Preliminaries and disjoint transitions}\label{sec:prelim}

\subsection{Definitions and notation}

All graphs used in this paper are finite, undirected, and simple, meaning that they have no loops or parallel edges except when explicitly mentioned or made clear otherwise. For a graph $H$, write $|H|=|V(H)|$, $e(H)=|E(H)|$, $d_H(v)$ for the degree of $v$, and $\delta(H)$ for its minimum degree, and $\omega(H)$ for the number of components of $H$. 

\bigskip

The \emph{length} of a path or cycle in a graph is its number of edges. Disjoint paths or cycles will always mean vertex-disjoint ones unless only internal vertex-disjointness is explicitly stated. An \emph{arc} of a cycle is a subpath of the cycle.

\bigskip

A \emph{theta} consists of three internally disjoint paths with the same pair of ends. The three paths are called the \emph{branches}; in particular, three parallel edges in a auxiliary multigraph form a theta.

\bigskip

A \emph{cut vertex} is a vertex whose deletion disconnects a connected graph, a \emph{bridge} is an edge whose deletion increases the number of components, and a \emph{block} is a maximal connected subgraph without a cut vertex. A \emph{cactus} is a graph whose  blocks are bridges or cycles, although it may also have isolated vertices. 

\bigskip

For a positive integer $q$, we write $[q]=\{1,\ldots,q\}$. 

\bigskip

The external theorems and results used below are the $2$-factor theorem \cite{EJKS}, Menger's theorem \cite{Menger}, Mader's path-packing theory \cite{Mader,SchrijverPaths,SchrijverMatroid}, and Rado's independent-transversal theorem \cite{Rado}. We state the precise forms that we need below.

\bigskip

\subsection{Path packings and independent representatives}
Let $T\subseteq V(G)$ be a \emph{terminal set}, partitioned into nonempty, pairwise disjoint terminal classes $T_1,\ldots,T_m$. Write $\mathcal T=\{T_1,\ldots,T_m\}$. A \emph{$\mathcal T$-path} has its ends in different terminal classes and no internal vertex in $T$. A \emph{packing} is a family of pairwise vertex-disjoint $\mathcal T$-paths. A \emph{transversal} of these paths is a vertex set meeting every $\mathcal T$-path. The vertices in a transversal may themselves make up some of the terminals fully. We use the following consequence of Mader's theorem \cite{Mader,SchrijverPaths}: if $\nu$ is the largest possible size of a packing, then a transversal $X$ exists with
\begin{equation}\label{eq:madercover}
 |X|\le2\nu.
\end{equation}

Clearly, deleting this set $X$ would disconnect all the terminal classes from each other in the graph $G$. We also need a matroid associated with the same path system. A \emph{matroid} $M$ on a finite \emph{ground set} $T$ is a family $\mathcal I$ of subsets, called \emph{independent sets}, such that $\varnothing\in\mathcal I$, every subset of an independent set is independent, and whenever $I,J\in\mathcal I$ satisfy $|I|<|J|$, some element of $J\setminus I$ can be added to $I$ while preserving independence. The \emph{rank} of $U\subseteq T$ is
\[
 \rank_M(U)=\max\{|I|:I\subseteq U,\ I\in\mathcal I\}.
\]
In the \emph{Mader matroid}, a set $I\subseteq T$ is independent precisely when some packing has its set of endpoints containing $I$ \cite{SchrijverMader,SchrijverMatroid}; this formulation is also stated explicitly by Pap \cite{Pap}. The packing may also have additional endpoints outside $I$.

\bigskip

For subsets $A_1,\ldots,A_s$ of a matroid's ground set, an \emph{independent system of distinct representatives} consists of distinct elements $a_i\in A_i$ whose set is independent. Note that these sets $A_i$ need not be disjoint and can even be equivalent too. Rado's theorem \cite{Rado} states that such representatives exist if and only if
\begin{equation}\label{eq:rado}
 \rank_M\left(\bigcup_{j\in J}A_j\right)\ge |J|
 \qquad\text{for every }J\subseteq[s].
\end{equation}

\bigskip

\subsection{Disjoint transitions}
Let $\CC=\{C_1,\ldots,C_q\}$ be disjoint cycles and let $T=\bigcup_{i=1}^qV(C_i)$. Taking $T_i=V(C_i)$ makes each $\mathcal T$-path a \emph{transition}: a path joining two different cycles whose internal vertices avoid $T$. Its endpoints on the cycles are called \emph{ports}. A packing of transitions defines an \emph{auxiliary multigraph} $Q$ on $[q]$, with one edge $ij$ for each transition from $C_i$ to $C_j$. Parallel edges are retained, and there are no loops. We call the cycles and their prescribed transition paths a \emph{realization} of $Q$. Throughout, these paths are pairwise vertex-disjoint and their internal vertices avoid every original cycle, so all ports are distinct, including those on the same cycle.

\begin{samepage}
\begin{lemma}\label{lem:ports}
Let $b\ge1$ be an integer, let $G$ be $t$-tough, and let $C_1,\ldots,C_q$ be disjoint cycles with $q\ge2$ and $|C_i|\ge L$ for every $i$. If
\begin{equation}\label{eq:portconditions}
 t>2b,\qquad L\ge\max\left\{2b,\frac{2bt}{t-2b}\right\},
\end{equation}
then there is a transition packing whose auxiliary multigraph has minimum degree at least $b$.
\end{lemma}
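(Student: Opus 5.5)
Since the statement already sets up the Mader matroid and Rado's theorem, I would use them directly. Form the family $A_1,\dots,A_s$ with $s=bq$ by taking each set $T_i=V(C_i)$ exactly $b$ times. An independent system of distinct representatives is then a set $I$ that contains $b$ distinct vertices of each cycle $C_i$ and is independent in the Mader matroid. By the definition of that matroid, some packing of transitions has all of $I$ among its endpoints. Every path in the packing is a $\mathcal T$-path, so its two ends lie on different cycles, and distinct paths have distinct endpoints. Hence each $C_i$ carries at least $b$ ports, and the auxiliary multigraph has minimum degree at least $b$. The whole task is therefore to check the Rado condition \eqref{eq:rado}.

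Take any $J\subseteq[s]$ and let $P\subseteq[q]$ be the set of cycles whose copies occur in $J$. Then $|J|\le b|P|$ and $U=\bigcup_{i\in P}V(C_i)$. It therefore suffices to prove
\[
\rank_M(U)\ge b|P| \quad\text{for every nonempty } P\subseteq[q].
\]
For this I will use the matroid rank formula for Mader matroids. In its usual form it says that $\rank(U)$ is the minimum, over vertex sets $X$, of $|U\cap X|$ plus a contribution from each component $K$ of $G-X$: roughly $\lfloor |{\partial}K\cap U|/2\rfloor$, or the full count when $K$ meets only one class. If only the packing bound \eqref{eq:madercover} is available, I would run the same argument on the auxiliary graph obtained by deleting all cycles outside $P$. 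The deleted cycles should be treated carefully, perhaps by keeping them as a separate terminal class.

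Now argue by contradiction. Suppose the rank is less than $b|P|$, and take a near-optimal cut $X$ with $|X|<2b|P|$, as in \eqref{eq:madercover}. Call a cycle $C_i$ with $i\in P$ \emph{uncovered} if $|X\cap V(C_i)|<|C_i|$. Deleting $X$ leaves each terminal class in a region that meets no other class. For each uncovered cycle, the nonempty set $V(C_i)\setminus X$ lies in components of $G-X$ containing no other cycle of $P$. So $G-X$ has at least (number of uncovered cycles) components, and at least $2$ once two cycles survive. Toughness then gives $|X|\ge t\cdot\#\text{uncovered}$. A cycle that is fully covered uses at least $L$ vertices of $X$. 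With $u$ uncovered cycles and $f$ covered ones, this gives
\[
2b(u+f)>|X|\ge \max\{tu,\,Lf\}.
\]
A weighted combination gives $|X|\ge (1-\lambda)tu+\lambda Lf$. Choosing $\lambda$ with $(1-\lambda)t=\lambda L$ yields $|X|\ge \frac{tL}{t+L}(u+f)$. The hypotheses $t>2b$ and $L\ge 2bt/(t-2b)$ are exactly what make $\frac{tL}{t+L}\ge 2b$, which is a contradiction. The condition $L\ge 2b$ handles the degenerate case $u\le1$, where toughness gives no components and only covered cycles count: then $|X|\ge Lf\ge 2bf$, and one also needs that a single cycle's own $\lfloor\cdot\rfloor$ contribution is large.

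The main obstacle is the step where the rank deficit is turned into a single small separator. The rank formula charges components through half their boundary terminals, not simply through $|X|$. So I need either to replace each such component by its attachment set, enlarging $X$ by at most the amount it saves, or to work with maximum packings and \eqref{eq:madercover} on a modified graph. My first attempt would be the modified graph: delete the cycles outside $P$, but add their vertices to $X$ only when they are actually needed. The difficulty is that deleting those cycles could break toughness, so I would instead merge all non-$P$ cycles into one extra terminal class.
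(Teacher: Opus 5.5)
Your plan follows the paper's proof: Rado on $b$ copies of each $T_i$, reduction to $\rank_M(T_P)\ge b|P|$, a Mader transversal $X$, and the trade-off between surviving classes (toughness) and wholly deleted classes (at least $L$ vertices each), closed by $tL/(t+L)\ge 2b$ when at least two cycles survive. Two steps are left open.

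The first is converting a rank deficit into a transversal with $|X|<2b|P|$, which you assert without justification. You do end on the right device, merging the classes outside $P$ into one class $T_0$, but you omit the observation that makes it work. Every path of a packing for the merged partition has at most one end in $T_0$, so it has an end in $T_P$. Each such path is still a transition for the original classes. Choosing one end in $T_P$ per path therefore gives an independent subset of $T_P$. Hence $\rank_M(T_P)\ge\nu_P$, so a deficit gives $\nu_P<b|P|$, and \eqref{eq:madercover} then gives $|X|\le2\nu_P<2b|P|$. No rank formula is needed.

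The second gap is genuine: your degenerate case does not close. Suppose exactly one cycle of $P$ keeps a vertex outside $X$ and $G-X$ is connected. Toughness then says nothing, and your count gives only $|X|\ge L(|P|-1)$, which need not contradict $|X|<2b|P|$; for $|P|=1$ it reads $|X|\ge0$. Appealing to ``a single cycle's own contribution'' does not repair this.

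The paper fixes it by counting $T_0$, which has at least $L$ vertices, as one of $|P|+1$ classes. Either at least two classes survive, and toughness gives $|X|\ge tr\ge t(|P|+1-|X|/L)$. Or at least $|P|$ whole classes lie in $X$, giving $|X|\ge L|P|\ge 2b|P|$.

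For $P=[q]$ there is no $T_0$, and a different gain is needed. All $2\nu$ endpoints of a maximum packing lie in $T$ and form an independent set, so a deficit forces $|X|\le2\nu<bq$. This is contradicted by $|X|\ge L(q-1)\ge 2b(q-1)\ge bq$ when at most one cycle survives (this is where $q\ge2$ is used), and by $|X|\ge tLq/(t+L)\ge 2bq$ otherwise.
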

\end{samepage}

\begin{proof}
Let $M$ be the Mader matroid for the terminal classes $T_i=V(C_i)$. To apply Rado's theorem to a list containing $b$ copies of each $T_i$, it suffices to prove
\begin{equation}\label{eq:rankcondition}
 \rank_M(T_I)\ge b|I|,
 \qquad T_I=\bigcup_{i\in I}T_i,
 \qquad I\subseteq[q].
\end{equation}
Indeed, a list involving the classes indexed by $I$ and b copies of every class $T_i$ has at most $b|I|$ sets and has union $T_I$.

First suppose $I$ is nonempty and proper, and put $a=|I|$. Merge all classes outside $I$ into one class
\[
 T_0=\bigcup_{j\notin I}T_j.
\]
The new partition has $a+1$ classes, each of size at least $L$, and has the same terminal set $T$. Let $\nu_I$ be the maximum packing size for this partition. By \eqref{eq:madercover}, choose a transversal $X$ of size $x\le2\nu_I$.

Let $r$ be the number of terminal classes with a vertex outside $X$. Deleting an entire class costs at least $L$ vertices, so
\begin{equation}\label{eq:survivingclasses}
 r\ge a+1-\frac{x}{L}.
\end{equation}
No component of $G-X$ meets two different surviving classes. To see this, take a path between vertices of two such classes. Along that path, two consecutive visits to the terminal set must occur in different classes at some point. The subpath between those visits is a $\mathcal T$-path avoiding $X$, a contradiction. A single class may occupy several components, which only strengthens the inequality $\omega(G-X)\ge r$.

\bigskip

If $r\ge2$, toughness gives
\[
 x\ge tr\ge t\left(a+1-\frac{x}{L}\right),
\]
and therefore
\begin{equation}\label{eq:properrank1}
 \nu_I\ge\frac{x}{2}\ge\frac{tL(a+1)}{2(L+t)}.
\end{equation}
If $r\le1$, at least $a$ entire classes lie in $X$, so
\begin{equation}\label{eq:properrank2}
 \nu_I\ge\frac{x}{2}\ge\frac{La}{2}.
\end{equation}
Since the assumptions imply $tL/(L+t)\ge2b$ and $L\ge2b$, both cases give $\nu_I\ge ba$. Every path for the merged partition has an endpoint in $T_I$, so choose one such endpoint from each path of a maximum packing. The chosen endpoints are distinct and form an independent subset of $T_I$ of size at least $ba$, because the same paths are also a packing for the original partition.

For $I=[q]$, use the original partition and let $\nu$ be its maximum packing size. The same argument, now with $q$ classes, gives
\[
 2\nu\ge\min\left\{\frac{tLq}{L+t},\ L(q-1)\right\}\ge bq.
\]
For the last inequality, use $tL/(L+t)\ge2b$ and $L(q-1)\ge2b(q-1)\ge bq$, since $q\ge2$. The full endpoint set of a maximum packing is independent, which proves \eqref{eq:rankcondition} for $I=[q]$; for $I=\varnothing$, the inequality reduces to $0\ge0$.

\bigskip

Rado's theorem therefore supplies $b$ distinct representatives from each class whose union is independent in $M$. By the definition of this matroid, there is a packing whose endpoint set contains all these representatives, so its auxiliary multigraph has at least $b$ incident transition edges at every vertex since we take $b$ copies of each terminal class which gives $b$ distinct edges for each.
\end{proof}

To obtain the minimum degree seven needed for our later arguments, we take $b=7$ and, for $t>14$, put
\begin{equation}\label{eq:threshold}
 B_t=\max\left\{14,\frac{14t}{t-14}\right\}.
\end{equation}
Once every current cycle has order at least $B_t$, \cref{lem:ports} provides minimum auxiliary degree seven.

\section{Reaching the length threshold}\label{sec:warmup}

We begin with a quantitative form of the elementary construction that
joins two cycles through two disjoint connecting paths. In this section,
the connecting paths need only avoid the two cycles being joined.

\begin{samepage}
\begin{lemma}\label{lem:linklength}
Let $A$ and $B$ be disjoint cycles of orders $a$ and $b$, let $k\ge2$ be an integer, and $m = \lfloor k/2 \rfloor$. Suppose there are $k$ pairwise vertex-disjoint paths from $A$ to $B$, each with interior disjoint from $A\cup B$.
For every integer $j$ with $2\le j\le k$, their union with $A\cup B$
contains a cycle of order at least
\begin{equation}\label{eq:windowbound}
 a+b-\left\lfloor\frac{(j-1)a}{k}\right\rfloor
       -\left\lfloor\frac b j\right\rfloor+2.
\end{equation}
It also contains a cycle of order at least
\begin{equation}\label{eq:pairbound}
 \left\lceil\gamma_k(a+b)+2\right\rceil,
 \qquad
 \gamma_k=
 \begin{cases}
 \dfrac{3m+1}{2(2m+1)},&k=2m+1,\\[10pt]
 \dfrac{3m-2}{2(2m-1)},&k=2m.
 \end{cases}
\end{equation}
\end{lemma}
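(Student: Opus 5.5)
The plan is to build every cycle from just two of the $k$ paths, and then pick a good pair either by a window argument or by averaging over all pairs. Let $P_1,\dots,P_k$ be the paths, with $x_i\in A$ and $y_i\in B$ the ends of $P_i$. Fix two indices $i\ne l$. The points $x_i,x_l$ split $A$ into two arcs, and $y_i,y_l$ split $B$ into two arcs. Choosing one arc of $A$, one arc of $B$, and the paths $P_i,P_l$ gives a cycle.

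Suppose we discard an $A$-arc with $\ell_A$ edges and a $B$-arc with $\ell_B$ edges. Then we lose exactly $\ell_A-1$ and $\ell_B-1$ interior vertices. The interiors of $P_i,P_l$ only add vertices. So the cycle has order at least
\[
a+b-\ell_A-\ell_B+2.
\]
This accounts for the $+2$ in both bounds. Everything else reduces to finding a pair $i,l$ for which some discarded arcs are short. Label the paths so that $x_1,\dots,x_k$ occur in this cyclic order on $A$. Let $g_1,\dots,g_k$ be the lengths of the gaps between consecutive $x$'s, so $\sum g_s=a$.

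\emph{Window bound \eqref{eq:windowbound}.} There are $k$ windows of $j-1$ cyclically consecutive gaps. Each gap lies in exactly $j-1$ of them, so the window lengths sum to $(j-1)a$. Hence some window has length at most $\lfloor (j-1)a/k\rfloor$, using integrality. This window spans $j$ consecutive points $x_{s},\dots,x_{s+j-1}$. Their partners $y_s,\dots,y_{s+j-1}$ are $j$ distinct points on $B$, which cut $B$ into $j$ arcs. One of these arcs, with ends $y_i,y_l$ say, has length at most $\lfloor b/j\rfloor$. Both $x_i$ and $x_l$ lie in the window, so the arc of $A$ between them inside the window has length at most the window length. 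We discard these two arcs, and the bound follows from the displayed inequality.

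\emph{Pair bound \eqref{eq:pairbound}.} Here I would average over all pairs instead of choosing one window.

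First take $k=2m+1$. For each pair $\{i,l\}$, let $d\le m$ be the cyclic distance between $i$ and $l$ in the order on $A$. Discard the short $A$-arc, which consists of $d$ gaps. On $B$, order the $y$'s cyclically and likewise discard the arc between $y_i,y_l$ containing at most $m$ of the $B$-gaps. For each $d\in[m]$ there are exactly $k$ pairs at distance $d$, and each gap lies in the short arc of exactly $d$ of them. So over all $\binom k2=km$ pairs, each gap of $A$ is discarded $\sum_{d=1}^m d=m(m+1)/2$ times, and the same holds for $B$. The total discarded length is therefore $(a+b)m(m+1)/2$. Some pair discards at most
\[
\frac{(a+b)(m+1)}{2(2m+1)},
\]
which gives order at least $\gamma_{2m+1}(a+b)+2$. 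We then take the ceiling because orders are integers.

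For $k=2m$, I would discard one path and apply the odd case with $2m-1=2(m-1)+1$ paths. This gives
\[
\frac{3(m-1)+1}{2(2m-1)}=\frac{3m-2}{2(2m-1)}=\gamma_{2m}.
\]

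The main obstacle is noticing that the pair bound needs the symmetric averaging over all pairs. Simply optimizing $j$ in \eqref{eq:windowbound}, even averaged with the roles of $A$ and $B$ swapped, gives only $7/12$ rather than $2/3$ for $k=3$. Once the averaging is set up, the counting identity that each gap lies in the short arcs of exactly $m(m+1)/2$ pairs is the only computation to verify.
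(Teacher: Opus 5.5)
\textbf{Comparison with the paper.} Your window bound is the paper's argument essentially verbatim. For the pair bound you take a different and more elementary route.

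The paper chooses a uniformly random pair of paths and keeps the \emph{metrically} longer arc on each cycle. It then has to show that the average shorter arc $\Phi(x_1,\dots,x_k)$ is largest at equal spacing. It does this through concavity of $\Phi$ and invariance under cyclic shifts, and then evaluates $\Phi$ at equal spacing separately for odd and even $k$.

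You instead discard the \emph{combinatorially} short arc, the one made of $d\le m$ gaps, and double count. Each gap lies in exactly $\binom{m+1}{2}$ of these arcs. So the average discarded length is exactly $(a+b)(m+1)/(2(2m+1))$ for every choice of gap lengths, with no extremal or Jensen-type step. Since the metrically shorter arc is never longer than the combinatorially short one, your count is the cleanest certificate for the same bound.

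Your treatment of even $k$ via $\gamma_{2m}=\gamma_{2m-1}$ is also correct for $k\ge4$. It makes visible something the paper's direct even computation hides: an even number of paths gives nothing beyond one fewer path.

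\textbf{The case $k=2$.} Here your reduction fails. Dropping a path leaves a single path, so there are no pairs to average over and ``some pair discards at most the average'' is vacuous. The case itself is trivial, but you must treat it separately.

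\emph{Direct fix.} The unique pair, with the longer arc on each cycle, gives a cycle of order at least $\lceil a/2\rceil+\lceil b/2\rceil+2\ge\lceil (a+b)/2+2\rceil$. This is also your window bound with $j=k=2$.

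\emph{Uniform fix.} Alternatively, run your double count directly for even $k=2m$:
\begin{itemize}
\item For $d\le m-1$, each gap again lies in exactly $d$ short arcs at distance $d$.
\item For each of the $m$ antipodal pairs, discard the shorter half, which has length at most $a/2$.
\end{itemize}
The total discarded length on $A$ is then at most $a\binom{m}{2}+ma/2=am^2/2$ over $m(2m-1)$ pairs, and similarly on $B$. This gives $\gamma_{2m}$ for all even $k\ge2$ at once.
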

\end{samepage}

\begin{proof}
The endpoints on $A$ divide it into $k$ arcs between consecutive endpoints. There are $k$ choices of an arc consisting of $j-1$ consecutive such arcs, and each arc occurs in exactly $j-1$ choices. Hence one arc $I\subseteq A$ has length at most $\lfloor(j-1)a/k\rfloor$ and contains exactly $j$ of the marked endpoints, including its two ends. Consider the $j$ connecting paths with endpoints on $I$. Their endpoints on $B$ divide it into $j$ arcs, one of length at most $\lfloor b/j\rfloor$. Choose the two paths ending at the ends of this arc. On $B$, use the complementary arc; on $A$, use the arc complementary to the subpath of $I$ between the two selected endpoints. These arcs and the two connecting paths form a simple cycle.
Since the connecting paths contribute at least two edges, the order
of this cycle satisfies \eqref{eq:windowbound}.

\bigskip

For \eqref{eq:pairbound}, choose an unordered pair of the $k$
connecting paths uniformly at random, and use a longer arc between
their endpoints on each cycle. First consider $A$, which has order $a$. Label the endpoints in
cyclic order and let $x_1,\ldots,x_k$ be the lengths of the arcs
between consecutive endpoints, so that
\[
 x_1+\cdots+x_k=a.
\]
The average length of the shorter arc between two endpoints is
\[
 \Phi(x_1,\ldots,x_k)
 =\frac{1}{\binom{k}{2}}
   \sum_{1\le r<s\le k}
   \min\left\{
       \sum_{i=r}^{s-1}x_i,\,
       a-\sum_{i=r}^{s-1}x_i
   \right\}.
\]
Regard $\Phi$ as a function of nonnegative real variables whose
sum is $a$. Each summand is the minimum of two linear functions,
so $\Phi$ is concave. Moreover, $\Phi$ is unchanged by cyclically
shifting its coordinates, since this simply relabels the endpoints.
Writing $\sigma$ for the cyclic shift
\[
 \sigma(x_1,\ldots,x_k)=(x_2,\ldots,x_k,x_1),
\]
concavity therefore gives
\[
 \Phi(x)
 =\frac1k\sum_{h=0}^{k-1}\Phi(\sigma^h x)
 \le
 \Phi\left(\frac1k\sum_{h=0}^{k-1}\sigma^h x\right)
 =
 \Phi\left(\frac ak,\ldots,\frac ak\right).
\]
Thus the average shorter arc is largest when the consecutive
arc lengths are all equal.

\bigskip

Suppose first that $k=2m+1$. For each $d=1,\ldots,m$, exactly $k$
unordered pairs have a shorter arc consisting of $d$ consecutive
arcs. Consequently,
\[
 \Phi\left(\frac ak,\ldots,\frac ak\right)
 =\frac{k}{\binom{k}{2}}
   \sum_{d=1}^{m}\frac{da}{k}
 =\frac{a}{m(2m+1)}\cdot\frac{m(m+1)}2
 =\frac{a(m+1)}{2(2m+1)}.
\]
Since the shorter and longer arcs together have length $a$,
the average longer arc has length at least
\[
 a-\frac{a(m+1)}{2(2m+1)}
 =\frac{3m+1}{2(2m+1)}\,a
 =\gamma_k a.
\]

If $k=2m$, then for each $d=1,\ldots,m-1$ there are $k$ such
pairs, while there are $m$ pairs whose endpoints divide the
cycle into two arcs of equal length. Hence
\[
 \begin{aligned}
 \Phi\left(\frac ak,\ldots,\frac ak\right)
 &=
 \frac{1}{\binom{k}{2}}
 \left(
   k\sum_{d=1}^{m-1}\frac{da}{k}
   +m\frac{ma}{k}
 \right)\\
 &=
 \frac{a}{m(2m-1)}
 \left(\frac{m(m-1)}2+\frac m2\right)
 =\frac{am}{2(2m-1)}.
 \end{aligned}
\]
The average longer arc therefore has length at least
\[
 a-\frac{am}{2(2m-1)}
 =\frac{3m-2}{2(2m-1)}\,a
 =\gamma_k a.
\]

Applying the same argument to $B$ gives an average contribution
of at least $\gamma_k b$. Although the cyclic orders of the
path endpoints on $A$ and $B$ may differ, choosing a pair of
paths uniformly induces a uniform choice of an endpoint pair
on each cycle. The average sum of the two longer arc lengths
is therefore at least $\gamma_k(a+b)$.
For each selected pair, the two longer arcs and the two
connecting paths form a simple cycle. The connecting paths
contribute at least two edges, so the average order of these
cycles is at least $\gamma_k(a+b)+2$. Some choice attains at
least this average, and its order is an integer. Thus $G$
contains a cycle of order at least $
 \left\lceil\gamma_k(a+b)+2\right\rceil,
$
as required.
\end{proof}

\bigskip

For integers $L\ge k\ge2$, define
\begin{equation}\label{eq:Uk}
 U_k(L)=\max\left\{
 \left\lceil2\gamma_kL+2\right\rceil,
 \max_{2\le j\le k}
 \left(2L-\left\lfloor\frac{(j-1)L}{k}\right\rfloor
             -\left\lfloor\frac L j\right\rfloor+2\right)
 \right\}.
\end{equation}
Each expression $a-\lfloor(j-1)a/k\rfloor$ and
$b-\lfloor b/j\rfloor$ is nondecreasing in its integer argument.
Thus \cref{lem:linklength} gives a cycle of order at least $U_k(L)$
whenever both input cycles have order at least $L$.

\begin{samepage}
\begin{lemma}\label{lem:warmup}
Let $k\ge2$ be an integer and let $G$ be $t$-tough with $t>k-1$.
Put $F=\circum(G)$ and
\begin{equation}\label{eq:Dtk}
 D_{t,k}=1+\frac{k}{t-k+1}.
\end{equation}
If $\CC$ is a family of $q\ge1$ disjoint cycles, each of order at least
an integer $L\ge k$, then there is a family $\CC'$ of disjoint cycles
such that
\begin{equation}\label{eq:warmround}
 |\CC'|\ge\frac{q-1}{D_{t,k}F},
 \qquad |C|\ge U_k(L)\quad(C\in\CC').
\end{equation}
The empty family is allowed when the lower bound is nonpositive.
\end{lemma}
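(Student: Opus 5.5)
The plan is a greedy merging procedure that stops when no two of the remaining cycles can be joined, followed by a toughness count showing that this happens only after many merges. Throughout, $F=\circum(G)$ bounds the order of every cycle we create.

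\textbf{Greedy merging.} Build $\CC'=\{C'_1,\ldots,C'_p\}$ step by step. Let $S_p=V(C'_1)\cup\cdots\cup V(C'_p)$, so $|S_p|\le pF$. Let $R_p$ be the set of cycles of $\CC$ that are disjoint from $S_p$.

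\emph{Merge step.} Suppose two cycles $A,B\in R_p$ are joined by $k$ pairwise disjoint $A$--$B$ paths in $G-S_p$. Shortening each path at its last vertex on $A$ and first vertex on $B$ makes the interiors avoid $A\cup B$. Then \cref{lem:linklength}, together with the monotonicity remark after \eqref{eq:Uk} and $|A|,|B|\ge L$, gives a cycle $C'_{p+1}$ of order at least $U_k(L)$. This cycle lies inside $A\cup B\cup{}$(the paths) $\subseteq G-S_p$, so it is disjoint from $C'_1,\ldots,C'_p$. Add it and repeat.

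\emph{Counting unavailable cycles.} The cycles of $\CC$ are disjoint and each vertex lies on at most one of them, so at most $|S_p|\le pF$ cycles of $\CC$ meet $S_p$. Hence $|R_p|\ge q-pF$. In particular, the cycles $A,B$ used in a merge lie in $R_p$ but not in $R_{p+1}$, since they meet $C'_{p+1}$; they are therefore automatically covered by this count.

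\textbf{Stopping configuration and cutting.} Suppose the process stops with $p$ cycles and $r=|R_p|$. If $r\le1$, then $q-pF\le1$, so $p\ge(q-1)/F\ge(q-1)/(D_{t,k}F)$, and we are done. So assume $r\ge2$. At the stop, no two cycles in $R=R_p$ are linked by $k$ disjoint paths in $G-S$, where $S=S_p$. By Menger's theorem, for any two $A,B\in R$ there is a set of fewer than $k$ vertices of $G-S$ separating $V(A)$ from $V(B)$. Such a separator may contain vertices of $A$ or $B$, but it cannot contain all of either, since $|A|,|B|\ge L\ge k$.

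Now build a set $X$ iteratively. Call a component of $G-S-X$ \emph{marked} if it contains a vertex of some cycle of $R$ that is not in $X$.

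\emph{Cutting step.} While two cycles $A,B\in R$ have surviving vertices in a common component, apply Menger inside the current graph $G-S-X$ to separate their surviving vertices. This adds fewer than $k$ vertices to $X$.

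\emph{Bookkeeping, the delicate step.} I must check two things.
\begin{itemize}
\item Each cutting step increases the number of marked components by at least one. After the cut, both $A$ and $B$ still have surviving vertices, and these lie in different components. Also, no previously marked component loses all of its $R$-vertices except through the new cut vertices. The subtle case is a cut that severs a single cycle $C\in R$ into pieces, or deletes all remaining vertices of some third cycle.
\item A cycle never has all its vertices deleted. To guarantee this, I would use a potential argument. Let $\Phi$ be the number of marked components minus the number of steps. I would argue that the cycles of $R$ retain at least one surviving vertex each, because $L$ is large relative to the total $(k-1)(r-1)$ removed vertices only when needed. Failing that, I would count a fully deleted cycle as contributing at least $L\ge k$ vertices of $X$, which pays for its own loss.
\end{itemize}
This bookkeeping is the main obstacle. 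I expect the slack built into $D_{t,k}$ (the factor $k/(t-k+1)$ rather than $1/(t-k+1)$) is there to absorb exactly these losses.

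\textbf{Final computation.} After at most $r-1$ cutting steps, $|X|\le(k-1)(r-1)$ (or at most $k(r-1)$ with the looser accounting), and $\omega(G-(S\cup X))\ge r\ge2$. Toughness then gives
\[
pF+k(r-1)\ge tr,\qquad\text{hence}\qquad r\le\frac{pF}{t-k+1}\cdot\frac{t-k+1}{t-k}\quad\text{(approximately)}.
\]
Combine this with $r\ge q-pF$. Using the $k(r-1)$ form,
\[
(t-k)r\le pF-k,
\]
which yields
\[
q-1\le pF\Bigl(1+\frac{k}{t-k+1}\Bigr)=pD_{t,k}F
\]
after routine rearrangement; the exact constant in $D_{t,k}$ is tuned at this point. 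This gives \eqref{eq:warmround}. When the bound is nonpositive, the empty family is allowed, so that case needs no argument.
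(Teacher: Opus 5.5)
Your greedy merging is the paper's inclusion-maximal family, and your case $r\le1$ is correct. The gap is the bookkeeping step you flag and leave open. Your final computation depends on it: it uses $\omega(G-(S\cup X))\ge r$ and $|X|\le(k-1)(r-1)$, and neither follows from your cutting procedure. You keep cutting between cycles that earlier separators have already damaged, so a later separator of size at most $k-1$ can remove all surviving vertices of one of the two cycles being separated. That step creates no new component, and the cycle is lost. For example, take $k=2$ and let $R$ consist of a hub cycle $h_1h_2\cdots h_m$ and cycles $A_1,\ldots,A_m$, with $A_i$ joined to $h_i$ by a single edge. Separating each $A_i$ from the hub by $\{h_i\}$ is a legitimate Menger cut. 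After $m$ such cuts the hub has disappeared, and only $m=r-1$ components remain. In the other direction, a cut through a third cycle can split its surviving vertices between two components that each contain another cycle, which can force more than $r-1$ cuts.

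Your fallback, charging each fully deleted cycle its $L\ge k$ vertices, does not rescue the lemma. A step that finishes off a cycle still costs up to $k-1$ vertices and produces no component. Since $L$ may equal $k$ (as in the first three rounds, where $L_{i-1}=k_i$), this accounting gives at best $|X|\le k(k-1)P$, which would need $t>k(k-1)$ rather than $t>k-1$. Separately, your ``$k(r-1)$ form'' $(t-k)r\le pF-k$ gives no upper bound on $r$ when $k-1<t\le k$. The rearrangement to $D_{t,k}$ is also never actually carried out.

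The missing idea, which is what the paper does, is to abandon a cycle as soon as any separator touches it:
\begin{enumerate}
\item Only cut between two cycles of $R$ that no earlier separator has met, working inside their common component.
\item Mark every cycle of $R$ that the new separator meets.
\item Stop when each component of $G-S-X$ contains at most one unmarked cycle.
\end{enumerate}
The two cycles being separated are then intact with at least $L\ge k$ vertices, so each cut increases the number of components. Writing $P_0=\omega(G-S)$ and $P=\omega(G-S-X)$, this gives $|X|\le(k-1)(P-P_0)$. Marked cycles number at most $|X|$, because the cycles are disjoint. Unmarked cycles are connected and avoid $S\cup X$, so there are at most $P$ of them. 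Hence $r\le P+|X|\le kP$. If $P\ge2$, toughness gives $tP\le pF+(k-1)P$, so $r\le kpF/(t-k+1)$ and $q\le pF+r\le D_{t,k}pF$. If $P\le1$, no cut was made and $r\le1$. This is where the numerator $k$ in $D_{t,k}$ comes from: it pays for the abandoned marked cycles, one per separator vertex, not for a cruder vertex count.
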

\end{samepage}

\begin{proof}
Choose an inclusion-maximal family $\mathcal M$ of disjoint cycles,
each of order at least $U_k(L)$ and obtained from two members of $\CC$
and $k$ paths as in \cref{lem:linklength}. Put
\[
 U=\bigcup_{K\in\mathcal M}V(K),\qquad
 m=|\mathcal M|,\qquad u=|U|.
\]
Since every cycle has order at most $F$, we have $u\le mF$, and at
most $u$ members of $\CC$ meet $U$; let
$\mathcal R$ be the remaining original cycles.

\bigskip

We separate the components of $G-U$ recursively, marking a cycle when a deleted separator meets it. Whenever a component
contains two unmarked cycles $A,B\in\mathcal R$, there cannot
be $k$ vertex-disjoint paths between them in that component. Indeed,
shortening each such path between a last visit to $A$ and the following
first visit to $B$ gives paths to which \cref{lem:linklength} applies.
The resulting cycle avoids $U$ and could be added to $\mathcal M$.

\bigskip

By the set version of Menger's theorem~\cite{Menger}, there is therefore
a set of at most $k-1$ vertices meeting every $A$--$B$ path in that
component. Delete this set and mark every cycle of $\mathcal R$ that
it meets; marked cycles are not selected for subsequent separations.
The separator may meet $A$ or $B$, but it cannot delete either cycle
entirely because both have at least $k$ vertices. Their surviving vertices
lie in different components, so each separation increases the component
count by at least one. Repeating this procedure leaves at most one
unmarked cycle in each component.

\bigskip

Let $Z$ be the union of the separators and set
\[
 z=|Z|,\qquad P_0=\omega(G-U),\qquad P=\omega(G-U-Z).
\]
The separators used at different stages are disjoint, so
\begin{equation}\label{eq:separatorcount}
 z\le(k-1)(P-P_0).
\end{equation}
At most $z$ cycles are marked, while each unmarked cycle lies wholly
in a final component. Hence
\begin{equation}\label{eq:remainingcycles}
 |\mathcal R|\le P+z.
\end{equation}
If $P\ge2$, toughness and \eqref{eq:separatorcount} give
\[
 tP\le u+z\le u+(k-1)P,
 \qquad
 |\mathcal R|\le kP\le\frac{ku}{t-k+1}.
\]
If $P=1$, there were no separation stages, so $z=0$ and
$|\mathcal R|\le1$. If $P=0$, then $G-U$ was empty and so was
$\mathcal R$. In every case,
\[
 q\le u+|\mathcal R|\le1+D_{t,k}u\le1+D_{t,k}mF.
\]
Taking $\CC'=\mathcal M$ proves the lemma.
\end{proof}

We use the following fixed sequence of length guarantees:
\begin{equation}\label{eq:lengthsequence}
 \begin{gathered}
 L_0=3,\qquad L_1=6,\qquad L_2=11,\qquad L_3=19,\\
 L_{i+1}=2L_i-\left\lfloor\frac{L_i}{5}\right\rfloor
                 -\left\lfloor\frac{L_i}{4}\right\rfloor+2
 \quad(i\ge3).
 \end{gathered}
\end{equation}
The path counts for these rounds are
\begin{equation}\label{eq:pathsequence}
 k_1=3,\qquad k_2=6,\qquad k_3=11,\qquad k_i=15\quad(i\ge4).
\end{equation}
For the first three rounds, \cref{lem:linklength} gives respectively
$L_1,L_2,L_3$; the second uses $\gamma_6=7/10$, and the third may use
$j=4$. Subsequent rounds use $k=15$ and $j=4$ in
\eqref{eq:windowbound}. Thus $L_i\le U_{k_i}(L_{i-1})$ for every $i$.
Also $k_i\le L_{i-1}$, and the toughness requirement $t>k_i-1$ holds
whenever $t>14$.

For $t>14$, define
\begin{equation}\label{eq:rt}
 r_t=\min\{i\ge0:L_i\ge B_t\},
 \qquad A_t=\prod_{i=1}^{r_t}D_{t,k_i}.
\end{equation}
Both quantities depend only on $t$ and are finite, since
$L_{i+1}\ge31L_i/20+2$ for $i\ge3$ ensures that the sequence
eventually reaches $B_t$.

\begin{samepage}
\begin{proposition}\label{prop:initial}
Let $t>14$, and let $G$ be a $t$-tough graph of order $n\ge3$ and
circumference $F$. With $r_t$ and $A_t$ defined by \eqref{eq:rt}, either
\begin{equation}\label{eq:initiallarge}
 F>\left(\frac{n}{2A_t}\right)^{1/(r_t+1)},
\end{equation}
or $G$ contains a family of $q_0\ge2$ vertex-disjoint cycles, each of
order at least $L_{r_t}\ge B_t$, such that
\begin{equation}\label{eq:initialmany}
 q_0\ge\frac{n}{2A_tF^{r_t+1}}.
\end{equation}
\end{proposition}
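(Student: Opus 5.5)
Start from a $2$-factor, provided by \cref{thm:2factor}: since $t>14\ge2$ and $n\ge3$, $G$ has a $2$-factor. Its cycles are disjoint, each has order at least $3=L_0$ and at most $F$, and they cover all $n$ vertices. The initial family $\CC_0$ therefore has $q^{(0)}\ge n/F$ cycles, each of order at least $L_0$. If $F=0$ there is no $2$-factor, which is impossible, so $F\ge3$ throughout.

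Next, iterate \cref{lem:warmup} for $i=1,\dots,r_t$ with $k=k_i$ and $L=L_{i-1}$. The hypotheses hold: $k_i\le L_{i-1}$, $t>14\ge k_i-1$, and each current cycle has order at least $L_{i-1}$. The lemma gives a family $\CC_i$ with cycles of order at least $U_{k_i}(L_{i-1})\ge L_i$ and
\[
 q^{(i)}\ge\frac{q^{(i-1)}-1}{D_{t,k_i}F}.
\]
When $q^{(i-1)}=0$ we still need $q^{(i-1)}\ge1$ to apply the lemma. I handle this by noting that once the bound becomes nonpositive, the conclusion below is already in the first alternative, so the iteration can stop there.

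To unroll the recursion, use $q^{(i-1)}-1\ge q^{(i-1)}/2$ whenever $q^{(i-1)}\ge2$. A cleaner route is to prove by induction the bound
\[
 q^{(i)}\ge\frac{n}{F^{i+1}\prod_{l\le i}D_{t,k_l}}-\sum_{s=1}^{i}\frac{1}{F^{i-s}\prod_{s\le l\le i}D_{t,k_l}}.
\]
Since $F\ge3$ and $D\ge1$, the subtracted geometric-type sum is at most $\sum_{h\ge0}F^{-h}\le 3/2$, and in fact at most $1/(1-1/F)$. At $i=r_t$ this gives
\[
 q_0:=q^{(r_t)}\ge \frac{n}{A_tF^{r_t+1}}-\frac32 .
\]

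Finally split into cases. If $n/(A_tF^{r_t+1})>2$, equivalently $F<(n/(2A_t))^{1/(r_t+1)}$, I want
\[
 q_0\ge \frac{n}{2A_tF^{r_t+1}}\quad\text{and}\quad q_0\ge2.
\]
This needs $x-3/2\ge x/2$ for $x=n/(A_tF^{r_t+1})$, which holds when $x\ge3$, not merely $x>2$. This is the main obstacle: matching the constant $2$ in the statement. I would first try to sharpen the error term. The subtracted sum is at most
\[
 \sum_{h\ge0}F^{-h}\cdot\frac{1}{D_{t,k_{r_t}}}\le\frac{1}{1-1/F}\le\frac32,
\]
and I would use integrality of $q_0$ together with the fact that $D_{t,k}>1$ strictly. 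Alternatively, I would absorb the constant by choosing the per-round inequality $q-1\ge q/2$ for $q\ge2$ inductively, giving $q^{(i)}\ge q^{(i-1)}/(2DF)$. That loses $2^{r_t}$, though, so the sharper sum estimate is preferable. Otherwise $F\ge(n/(2A_t))^{1/(r_t+1)}$, and the boundary case of equality must be placed in the first alternative by the strict/nonstrict bookkeeping above; the case where some intermediate $q^{(i)}$ drops to $\le1$ is handled the same way. In the second case the cycles have order at least $L_{r_t}\ge B_t$ by the definition of $r_t$.
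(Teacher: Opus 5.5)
\textbf{Gap: the unrolled recurrence loses a factor of $F$.} Your plan (a $2$-factor, then $r_t$ rounds of \cref{lem:warmup}, then unrolling the recurrence and splitting on $T=n/(A_tF^{r_t+1})$) is the paper's. But your unrolled bound is off by one power of $F$, and this is exactly what creates the obstacle you could not get past.

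The $-1$ lost in round $s$ is divided by $D_{t,k_s}F$ in that same round. So after round $i$ it contributes $1/\bigl(F^{i-s+1}\prod_{l=s}^{i}D_{t,k_l}\bigr)$, not $1/\bigl(F^{i-s}\prod_{l=s}^{i}D_{t,k_l}\bigr)$. Already $q^{(1)}\ge (n/F-1)/(D_{t,k_1}F)=n/(D_{t,k_1}F^2)-1/(D_{t,k_1}F)$. Every error term therefore carries at least one factor $1/F$. Since $F\ge3$ and $D_{t,k}>1$, the total error is strictly less than $\sum_{h\ge1}3^{-h}=1/2$, not $3/2$.

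None of your proposed remedies rescues the weaker estimate:
\begin{itemize}
\item With error up to $3/2$ and $T=2$ you only get $q_0>1/2$, i.e.\ $q_0\ge1$, even after using integrality and $D_{t,k}>1$.
\item Dividing by $D_{t,k_{r_t}}$ gives no uniform gain, because $D_{t,k}\to1$ as $t\to\infty$.
\item The per-round bound $q-1\ge q/2$ replaces $2A_t$ by $2^{r_t}A_t$, which is not the statement.
\end{itemize}

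\textbf{The boundary case.} The first alternative is a strict inequality. So $T=2$, i.e.\ $F=(n/(2A_t))^{1/(r_t+1)}$, cannot be placed there, as you propose; it must be handled in the second alternative. Split instead into $T<2$, which is exactly the first alternative, and $T\ge2$.

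\textbf{Finishing the case $T\ge2$.} Since $\prod_{l\le i}D_{t,k_l}\le A_t$ and $F^{i+1}\le F^{r_t+1}$, the main term at every round $i\le r_t$ is at least $T$. This also holds for $i=0$, where $q^{(0)}\ge n/F\ge T$. The corrected bound then gives $q^{(i)}>T-1/2\ge 3/2$, hence $q^{(i)}\ge2$ throughout. So \cref{lem:warmup} applies at every round, with no need for your stopping argument. At the end, $q_0>T-1/2\ge T/2$ and $q_0\ge2$, as required.
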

\end{samepage}

\begin{proof}
The $2$-factor theorem~\cite{EJKS} gives at least $n/F$ disjoint cycles
covering $V(G)$. Starting from this family, apply \cref{lem:warmup}
with the successive path counts in \eqref{eq:pathsequence}. Let $p_i$
be the number of cycles after $i$ rounds and put
$A_{t,i}=\prod_{h=1}^iD_{t,k_h}$, with $A_{t,0}=1$. Iterating the recurrence
$p_i\ge(p_{i-1}-1)/(D_{t,k_i}F)$ gives
\begin{equation}\label{eq:warmrecurrence}
 p_i\ge\frac{n}{A_{t,i}F^{i+1}}
 -\sum_{j=1}^i\frac{1}{F^{i-j+1}\prod_{h=j}^iD_{t,k_h}}
 >\frac{n}{A_{t,i}F^{i+1}}-\frac12.
\end{equation}
For the last inequality, use $D_{t,k_h}>1$ and $F\ge3$.

\bigskip

Set $T=n/(A_tF^{r_t+1})$. If $T<2$, then \eqref{eq:initiallarge}
holds. If $T\ge2$, the first term on the right side of
\eqref{eq:warmrecurrence} is at least $T$ for every $i\le r_t$.
Thus every intermediate family contains at least two cycles, and the
construction continues through all $r_t$ rounds. At the final round,
$p_{r_t}>T-1/2\ge T/2$, which proves \eqref{eq:initialmany}.
\end{proof}

For $t=15$, the recurrence \eqref{eq:lengthsequence} gives
$L_7=138<210=B_{15}$ and $L_8=217$, so
\begin{equation}\label{eq:15constants}
 r_{15}=8,\qquad L_{r_{15}}=217,\qquad
 A_{15}=\frac{16}{13}\cdot\frac85\cdot\frac{16}{5}\cdot16^5
 =\frac{2^{31}}{325}.
\end{equation}

\bigskip

\section{Smoothing a cactus}\label{sec:cactus}

We first record the elementary structural facts behind the smoothing argument that we use later to recombine cycles without getting stuck or losing any of our large cycles. Note that the definition of a theta also includes three parallel edges, viewed as three paths of length one.

\begin{samepage}
\begin{lemma}\label{lem:cactusbound}
If a loopless multigraph $P$ contains no theta, then every edge-containing block of $P$ is a bridge or a cycle. Isolated vertices are allowed. If $p=|V(P)|$ and $c=\omega(P)$, then
\begin{equation}\label{eq:cactusedges}
 e(P)\le2(p-c).
\end{equation}
\end{lemma}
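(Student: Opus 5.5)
We prove the two assertions in turn: first the block structure, then the edge count obtained by summing over blocks.

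\emph{Block structure.} Let $K$ be a block of $P$ with at least one edge. If $K$ has only two vertices, then all its edges are parallel edges between them. There can be at most two such edges, since three parallel edges form a theta. So $K$ is a single edge (a bridge) or a $2$-cycle formed by a digon.

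Now suppose $K$ has at least three vertices. Then $K$ is $2$-connected, so it contains a cycle $C$. We claim $K=C$. Suppose not. Then some edge of $K$ lies outside $C$, or some vertex of $K$ lies off $C$. In either case, the ear decomposition of $2$-connected graphs (equivalently, Menger's theorem \cite{Menger} applied inside $K$) yields a $C$-path: a path $R$ with distinct ends $x,y$ on $C$ and no edges or internal vertices in $C$. If the extra object is an edge joining two vertices of $C$, it is itself such a path of length one; parallel edges are included here. The two arcs of $C$ between $x$ and $y$, together with $R$, are three internally disjoint $x$--$y$ paths, which is a theta. This contradiction shows every edge-containing block is a bridge or a cycle, where a cycle may be a digon.

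\emph{Edge count.} It suffices to prove \eqref{eq:cactusedges} for a connected $P$, i.e.\ to show $e(P)\le2(p-1)$, and then sum over components. The block--cut tree of a connected graph gives
\[
\sum_{K}\bigl(|K|-1\bigr)=p-1,
\]
where the sum runs over all blocks $K$. A block with $|K|$ vertices that is a bridge or a cycle has at most $\max\{1,|K|\}$ edges. This is at most $2(|K|-1)$ because $|K|\ge2$; the worst case is the digon, which has $2$ edges on $2$ vertices. Summing over blocks gives $e(P)\le2(p-1)$. An isolated vertex contributes $0\le2(1-1)$, so it fits the same bound. Summing over the $c$ components yields $e(P)\le2(p-c)$.

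The main point requiring care is extracting the $C$-path inside a block when the block contains more than $C$. In particular, the multigraph case must be handled so that parallel edges and digons are treated correctly.
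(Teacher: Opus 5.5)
Your proof is correct and follows essentially the same route as the paper. For the block structure, a cycle in a non-bridge block plus any $C$-path with distinct ends gives a theta; for the edge bound, your identity $\sum_K(|K|-1)=p-1$ from the block--cut tree is the paper's successive block-attachment count. The differences are cosmetic: you cite the standard ear/$C$-path fact where the paper constructs the path from a component of $B-V(C)$, and you treat two-vertex blocks separately via three parallel edges.
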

\end{samepage}

\begin{proof}
Let $B$ be an edge-containing block that is not a single edge. Then $B$ contains a cycle $C$, possibly a cycle of length two. If $B$ has an additional edge with both ends on $C$, this edge and the two arcs of $C$ between its ends form a theta.

\bigskip

Otherwise, suppose $B$ has a vertex outside $C$, and take a component $D$ of $B-V(C)$. Since $B$ is connected, $D$ has a neighbor on $C$. It must have two distinct neighbors there since a unique neighbor would be a cut vertex of $B$. The connectedness of $D$ gives a path between two such neighbors whose internal vertices lie in $D$. This path and the two corresponding arcs of $C$ form a theta. Both possibilities are excluded, so $B=C$.

\bigskip

Within each component of $P$, start from one vertex and attach its blocks successively at their shared cut vertices. This is possible because the block incidence graph is a tree: it has one vertex for each block and each cut vertex, and an edge records containment of a cut vertex in a block. A bridge adds one edge and one new vertex. A cycle block of length $k\ge2$ adds $k$ edges and $k-1$ new vertices, with $k\le2(k-1)$ -- see Figure \ref{fig:cactus-blocks}. In this figure, the gray graph has already been built from $r$. The bold bridge attached at $u$ adds one edge and one new vertex the bold cycle attached at $v$ adds six edges and five new vertices. Summing over all blocks gives at most two edges for each vertex other than the one initial vertex per component, proving \eqref{eq:cactusedges}.
\end{proof}
% Insert after the final paragraph of the proof of Lemma 4.1.
% Requires \usepackage{tikz} in the preamble.
% Gray: the graph already built from r. Black: two blocks being added.
\begin{figure}[htbp]
\centering
\begin{tikzpicture}[x=1cm,y=1cm,line cap=round,line join=round,
  existing/.style={draw=black!50,line width=.5pt},
  added/.style={draw=black,line width=1.1pt}]
  % Begin with r, then add a bridge, a 4-cycle, a bridge,
  % and a 6-cycle. These blocks have already been attached.
  \coordinate (r) at (-3.45,0);
  \coordinate (a) at (-2.65,0);
  \coordinate (b) at (-2.25,.65);
  \coordinate (c) at (-1.85,0);
  \coordinate (d) at (-2.25,-.65);
  \draw[existing] (r)--(a);
  \draw[existing] (a)--(b)--(c)--(d)--cycle;

  \foreach \i in {0,...,5}
    \coordinate (H\i) at
      ({-.10+.85*cos(60*\i)},{.70*sin(60*\i)});
  \draw[existing] (c)--(H3);
  \draw[existing] (H0)--(H1)--(H2)--(H3)--(H4)--(H5)--cycle;

  % A new bridge meets the old graph only at u=H4.
  \coordinate (u) at (H4);
  \coordinate (s) at (-.95,-1.40);
  \draw[added] (u)--(s);

  % A new 6-cycle meets the old graph only at v=H0.
  % Its six vertices are v, K4, K5, K0, K1, K2.
  \coordinate (v) at (H0);
  \foreach \i in {0,1,2,4,5}
    \coordinate (K\i) at
      ({1.65+.90*cos(60*\i)},{.90*sin(60*\i)});
  \draw[added] (v)--(K4)--(K5)--(K0)--(K1)--(K2)--cycle;

  \foreach \name in {r,a,b,c,d,H0,H1,H2,H3,H4,H5}
    \fill[black!60] (\name) circle[radius=.9pt];
  \foreach \name in {s,K0,K1,K2,K4,K5}
    \fill (\name) circle[radius=.9pt];
  \foreach \name in {r,u,v}
    \fill (\name) circle[radius=1.15pt];

  \node[above=3pt] at (r) {$r$};
  \node[below right=-2pt] at (u) {$u$};
  \node[above=4pt] at (v) {$v$};
\end{tikzpicture}
\caption{}
\label{fig:cactus-blocks}
\end{figure}

The following lemma is one of the main lemmas of the paper.

\bigskip

\begin{lemma}\label{lem:smoothing}
Let $\{C_v:v\in V(Q)\}$ be $q\ge1$ disjoint cycles with a realization of a loopless auxiliary multigraph $Q$ satisfying $\delta(Q)\ge7$. Suppose $X\subseteq V(Q)$, write $x=|X|$, and assume that $Q-X$ contains no theta. Then there are pairwise disjoint sets
\[
 S_1,\ldots,S_a\subseteq V(Q)\setminus X,\qquad |S_j|\ge2,
\]
and disjoint cycles $K_1,\ldots,K_a$ in $G$ such that
\begin{equation}\label{eq:smoothingcount}
 R:=\sum_{j=1}^a|S_j|\ge\frac{q-5x+c+2}{4}>\frac{q-5x}{4},
 \qquad c=\omega(Q-X).
\end{equation}
For every $v\in S_j$, the cycle $K_j$ contains an arc $A_v\subseteq C_v$ with at least $2|C_v|/3$ edges. Consequently,
\begin{equation}\label{eq:smoothinglength}
 |K_j|\ge\frac23\sum_{v\in S_j}|C_v|.
\end{equation}
The other parts of $K_j$ use only the prescribed transition paths and arcs on cycles indexed by $X$. In particular, $K_j$ is disjoint from every $C_v$ with $v\notin X\cup S_j$. The output families may be empty when the non-strict lower bound in \eqref{eq:smoothingcount} is nonpositive.
\end{lemma}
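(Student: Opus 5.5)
The plan is to build the cycles $K_j$ from short cycles of an auxiliary structure derived from $Q$. Each vertex $v$ placed in some $S_j$ should contribute an arc of $C_v$ between two of its ports. The length requirement $|A_v|\ge 2|C_v|/3$ will come from a simple observation. If at least three ports of $C_v$ are available to $K_j$, they cut $C_v$ into three arcs, and the shortest of these has at most $|C_v|/3$ edges. The two ports bounding that shortest arc are joined through the rest of $C_v$ by an arc of at least $2|C_v|/3$ edges. So the combinatorial goal is a family of disjoint ``cycle-like'' structures in $Q$ such that every vertex $v$ of $S_j$ has three incident transitions in the structure. Any two of these three transitions may then be chosen so as to close up a cycle in $G$. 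Bound \eqref{eq:smoothinglength} follows by summing $|A_v|$ over $v\in S_j$.

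\textbf{Step 1: edges of $Q-X$ and edges into $X$.} By \cref{lem:cactusbound}, $Q-X$ is a cactus with $e(Q-X)\le 2(q-x-c)$. Since $\delta(Q)\ge7$, the number of transitions between $V(Q)\setminus X$ and $X$ is at least
\[
7(q-x)-2e(Q-X)\ge 3(q-x)+4c.
\]
So a typical vertex outside $X$ has at least three transitions into $X$, apart from those with cactus degree $3$ or $4$. Every vertex of $Q-X$ has at least $7-\deg_{Q-X}(v)$ edges to $X$.

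\textbf{Step 2: routing through $X$.} A cycle $C_y$ with $y\in X$ can act as a hub. Its ports split it into arcs, and disjoint arcs of $C_y$ between consecutive used ports link distinct transitions entering $C_y$. Suppose $u,w\notin X$ are sent through the same $y$, via transitions whose ports on $C_y$ are consecutive. Then the arc of $C_y$ between these ports joins the two transitions without touching other ports. I would therefore process the vertices of $X$ one at a time. At each $y$, I would pair up consecutive ports belonging to transitions from not-yet-used vertices of $Q-X$, building a linkage graph $H$ on $V(Q)\setminus X$. In $H$, an edge $uw$ records a path from $C_u$ to $C_w$ through arcs of cycles indexed by $X$, with distinct pairs using disjoint arcs. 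The cactus edges of $Q-X$ are added to $H$ as well. The loss to account for is this: at each hub $y$ a bounded number of ports cannot be paired (parity, and wrap-around on the cycle). I expect this to cost a constant number of vertices of $Q-X$ per vertex of $X$, which is where a loss such as $5x$ should come from.

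\textbf{Step 3: extracting the $K_j$ and counting.} In $H$ every remaining vertex should have degree at least $3$. I would choose a maximal family of vertex-disjoint cycles in $H$ (or thetas, when a vertex needs its third port), and take the $S_j$ to be their vertex sets, so that $|S_j|\ge2$. Each such cycle of $H$ lifts to a cycle $K_j$ in $G$, using only transitions, arcs of $C_v$ for $v\in S_j$, and arcs of hub cycles. For each $v\in S_j$, one uses the third port to select the long arc as above. The lower bound $R\ge (q-5x+c+2)/4$ should follow from a counting argument. The unused vertices form a forest-like remainder of $H$, and the edge count $e(Q-X)\le 2(q-x-c)$ together with the degree-$7$ surplus bounds their number by roughly $3R+5x-c$. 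This would give $4R\ge q-5x+c+2$.

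\textbf{Main obstacle.} I expect the hardest step to be Step 2 combined with the counting: making the hub arcs on different $C_y$, and on the same $C_y$, genuinely disjoint while wasting only about five vertices per element of $X$. A further difficulty is ensuring that every vertex of a chosen cycle really has a third usable port, so that the $2/3$ arc can be realized. If a direct greedy fails, my fallback is to contract $Q-X$ along its cactus blocks first. I would then use a weighted discharging argument, with weight $1$ per vertex, $-1/4$ per unused vertex, and charge $5$ per hub, to reach exactly the constant in \eqref{eq:smoothingcount}.
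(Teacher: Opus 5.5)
There is a genuine gap in Step~3, at a point you yourself flag. Once you fix a cycle of your linkage graph $H$ on $V(Q)\setminus X$, the two $H$-edges at $v$ dictate which two ports of $C_v$ the cycle uses. The longer arc between two prescribed ports may have only about $|C_v|/2$ edges. ``Using the third port'' would mean swapping one of those $H$-edges for the third one and still closing a cycle. That is theta-like flexibility at \emph{every} vertex of the cycle, but a theta gives it only at its two branch vertices, and $Q-X$ has no thetas anyway.

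The missing idea is to reverse the order of choices:
\begin{itemize}
\item Restrict to the set $D$ of vertices of $Q-X$ with at least three transitions into $X$. For $v\in D$, fix in advance the two ports bounding the shortest of the three arcs. This gives a path $B_v$ (the long arc $A_v$ plus two transitions) whose \emph{both} ends lie on host cycles.
\item Host endpoints are interchangeable, so all the closing up can happen on $X$. Take the multigraph on $X$ with one edge $e_v$ (possibly a loop) for each $v\in D$. Delete at most $x-1$ spanning-tree edges so that every degree is even.
\item On each host cycle, pair the retained ports consecutively. Choose between the two consecutive pairings so that at most half of the loops close on themselves.
\item The resulting subgraph of $G$ is $2$-regular, hence a disjoint union of cycles. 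Discard the components that contain a single $B_v$.
\end{itemize}
The cactus edges of $Q-X$ are never used in this construction; they enter only the counting.

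The counting is also missing, and the mechanism you suggest for it does not work as stated. A maximal family of disjoint cycles in a graph of minimum degree $3$ need not cover a constant fraction of the vertices; in $K_{3,n}$ it covers at most six. So no bound of the form $4R\ge q-5x+c+2$ follows from Step~3 without further structure.

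Your Step~1 averaging is also too weak. The vertices that may lack three $X$-transitions are those of cactus degree at least $5$, not $3$ or $4$. The count $5(p-|D|)\le 2e(Q-X)\le 4(p-c)$, with $p=q-x$, only gives $|D|\ge (p+4c)/5$. The paper gets $|D|\ge(p+c)/2$ from the block tree. A vertex outside $D$ has cactus degree at least $5$, and its parent block supplies at most $2$ of this, so its child blocks introduce at least two new vertices. These introduced sets are disjoint with total size $p-c$.

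The rest of the count then goes through. At least $R_0\ge |D|-x+1$ paths are retained, and at most $(R_0+x)/2$ components are discarded, each containing exactly one path. Hence $R\ge (R_0-x)/2\ge(|D|-2x+1)/2\ge(q-5x+c+2)/4$.
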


\begin{proof}
Call the cycles $C_u$ with $u\in X$ the \emph{host cycles}. Put $P=Q-X$, $p=q-x$, and $c=\omega(P)$. By \cref{lem:cactusbound},
\[
 e(P)\le2(p-c).
\]
We have $x\ge1$: otherwise the minimum degree of $Q$ and the last inequality would give $7q\le2e(Q)\le4(q-c)$, a contradiction.

\bigskip

Let $D$ be the set of vertices of $P$ incident with at least three edges
joining them to $X$, counting parallel edges separately. A vertex outside
$D$ has degree at least five in $P$. Choose a root vertex in each component of $P$ and attach its blocks successively, starting from the root. Each block is attached at one vertex already present and introduces all its other vertices. For a vertex $v$ other than the root, call the block that introduced $v$ its \emph{parent block}; call the blocks subsequently attached at $v$ its \emph{child blocks}. The root has only child blocks.

\bigskip

The parent block contributes at most two to $d_P(v)$. Thus, if $v\notin D$, at least three edges incident with $v$ belong to its child blocks. A child block that is a bridge contributes one such edge and introduces one vertex. A child block that is a cycle contributes two such edges and introduces at least one vertex. The child blocks of $v$ must therefore introduce at least two vertices. The same conclusion holds at a root outside $D$, since all of its incident edges belong to child blocks.

The sets of vertices introduced by the child blocks of different vertices
are disjoint. Altogether there are $p-c$ nonroot vertices, so
$2|V(P)\setminus D|\le p-c$. Hence
\begin{equation}\label{eq:Dsize}
 |D|\ge\frac{p+c}{2}.
\end{equation}

For each $v\in D$, choose three transitions from $C_v$ to cycles indexed by $X$. Their three distinct ports divide $C_v$ into three arcs. Choose a shortest one and let $A_v$ be its complementary arc with the same ends. Thus
\[
 |E(A_v)|\ge\frac23|C_v|.
\]
Let $B_v$ be the path consisting of $A_v$ and the two chosen transitions at its endpoints. The third port lies in the interior of $A_v$, but its transition is omitted, so it introduces no additional edge into $B_v$. Both ends of $B_v$ lie on cycles indexed by $X$; they are distinct vertices even if they lie on the same such cycle. The internal vertices of $B_v$ avoid all cycles indexed by $X$. The disjointness of the transition paths and the original cycles imply that the paths $B_v$, $v\in D$, are pairwise vertex-disjoint.

\bigskip

Define an auxiliary multigraph $H$ with vertex set $X$. For each $v\in D$, add an edge $e_v$ between the indices of the cycles containing the ends of $B_v$. This edge is a loop when both ends lie on the same cycle. Thus $e(H)=|D|$. We use $H$ only to decide which paths to retain and how to connect their endpoints.

\bigskip

First delete at most $x-1$ edges of $H$ so that every remaining degree is even. Choose a spanning tree in each component of $H$ after ignoring loops, and root each tree. For every other vertex, its parent is the next vertex on the unique path to the root. Process these vertices in decreasing order of their distance from the root, measured by the number of edges on that path. If a vertex has odd current degree, delete its tree edge to its parent; otherwise keep that edge. Its degree is then even and no later deletion changes it. The root also has even degree, because the total degree in its original component is even and all other degrees are even. Only edges of the spanning trees have been deleted, at most $x-1$ altogether.

\bigskip

Let $H'$ be the remaining multigraph and put $R_0=e(H')$. Then
\begin{equation}\label{eq:evenedges}
 R_0\ge |D|-x+1.
\end{equation}
Retain only the paths $B_v$ whose edges $e_v$ remain in $H'$. At a vertex $u\in X$, the retained endpoints on $C_u$ are distinct ports, and their number is $d_{H'}(u)$. A loop contributes two ports, in agreement with the degree convention.

\bigskip

Suppose $d_{H'}(u)=2m\ge4$, and list the ports as $z_1,\ldots,z_{2m}$ in cyclic order on $C_u$. There are two consecutive pairings:
\begin{align*}
 &(z_1,z_2),(z_3,z_4),\ldots,(z_{2m-1},z_{2m}),\\
 &(z_2,z_3),(z_4,z_5),\ldots,(z_{2m},z_1).
\end{align*}
For either pairing, join each pair by the arc whose interior contains no other retained port. The selected arcs within that pairing are pairwise vertex-disjoint.

\bigskip

A loop $e_v$ at $u$ will form a cycle containing only its own path $B_v$ precisely when the two ends of $B_v$ are paired together. Any fixed pair occurs in at most one of the two displayed pairings. If $\ell_u$ loops are incident with $u$, choose the pairing for which at most $\ell_u/2$ of them close in this way.

\bigskip

If $d_{H'}(u)=2$, join its two ports by either arc between them. If the two ports belong to one loop, that loop necessarily closes by itself. Let $h$ be the number of degree-two vertices of $H'$ incident with a loop. There is exactly one loop at each such vertex, and $h\le x$. No arcs are selected at vertices of degree zero.

\bigskip

Let $J$ be the subgraph of $G$ formed by the retained paths $B_v$ and all the selected arcs on the cycles indexed by $X$. Each port has one incident edge from its path $B_v$ and one from its selected arc on $C_u$, so its degree in $J$ is exactly two. Every other included vertex lies internally on exactly one of these paths or arcs and also has degree two. There are no other intersections, by the disjointness of the prescribed paths and the choice of disjoint host arcs. Thus $J$ is a finite simple graph all of whose vertices have degree two. Each component is therefore a simple cycle. Figure~\ref{fig:smoothing} shows a component obtained from two of the paths $B_v$.

% A single graph: the bold subgraph is one simple cycle.
% Requires only \usepackage{tikz}.
\begin{figure}[htbp]
\centering
\begin{tikzpicture}[x=1cm,y=1cm,line cap=round,line join=round,
  unused/.style={draw=black!40,line width=.4pt},
  retained/.style={draw=black,line width=1.1pt}]
  \begin{scope}[shift={(-3.2,0)}]
    \foreach \i in {0,...,11}
      \coordinate (U\i) at ({30*\i}:.65);
  \end{scope}
  \begin{scope}[shift={(3.2,0)}]
    \foreach \i in {0,...,11}
      \coordinate (W\i) at ({30*\i}:.65);
  \end{scope}
  \begin{scope}[shift={(0,.85)}]
    \foreach \i in {0,...,11}
      \coordinate (V\i) at ({30*\i}:.7);
  \end{scope}
  \begin{scope}[shift={(0,-.85)}]
    \foreach \i in {0,...,11}
      \coordinate (Z\i) at ({30*\i}:.7);
  \end{scope}

  % All four original cycles are 12-cycles.
  \foreach \name in {U,W,V,Z}{
    \foreach \i [evaluate=\i as \j using {int(mod(\i+1,12))}]
      in {0,...,11}
      \draw[unused] (\name\i)--(\name\j);
  }

  % The selected host arcs join the ports along the outside of each host.
  \draw[retained] (U1)--(U2)--(U3)--(U4)--(U5)--(U6)
    --(U7)--(U8)--(U9)--(U10)--(U11);
  \draw[retained] (W7)--(W8)--(W9)--(W10)--(W11)--(W0)
    --(W1)--(W2)--(W3)--(W4)--(W5);

  % Each selected outside arc has eight of its cycle's twelve edges.
  \draw[retained] (V7)--(V6)--(V5)--(V4)--(V3)--(V2)
    --(V1)--(V0)--(V11);
  \draw[retained] (Z5)--(Z6)--(Z7)--(Z8)--(Z9)--(Z10)
    --(Z11)--(Z0)--(Z1);

  % The four displayed transitions are disjoint paths.
  \coordinate (P1) at (-1.65,.4125);
  \coordinate (P2) at (1.65,.4125);
  \coordinate (P3) at (-1.65,-.4125);
  \coordinate (P4) at (1.65,-.4125);
  \draw[retained] (U1)--(P1)--(V7);
  \draw[retained] (V11)--(P2)--(W5);
  \draw[retained] (U11)--(P3)--(Z5);
  \draw[retained] (Z1)--(P4)--(W7);

  \foreach \name in {U,W,V,Z}
    \foreach \i in {0,...,11}
      \fill (\name\i) circle[radius=.85pt];
  \foreach \i in {1,...,4}
    \fill (P\i) circle[radius=.85pt];
  \node at (-3.2,0) {$C_u$};
  \node at (3.2,0) {$C_w$};
  \node at (0,.85) {$C_v$};
  \node at (0,-.85) {$C_{v'}$};
\end{tikzpicture}
\caption{The bold cycle $K_j$ joins $B_v$ and $B_{v'}$ through the host cycles $C_u$ and $C_w$, with only the selected transitions shown.}
\label{fig:smoothing}
\end{figure}

Every component of $J$ contains a path $B_v$. It contains exactly one such path if and only if $e_v$ is a loop whose two ports have been paired together. Discard these components. If $\ell$ is the total number of loops in $H'$, the number discarded is at most
\[
 h+\frac{\ell-h}{2}=\frac{\ell+h}{2}\le\frac{R_0+x}{2}.
\]
Each discarded component contains exactly one retained path. The number $R$ of retained paths in the remaining components consequently satisfies
\begin{align*}
 R&\ge\frac{R_0-x}{2}
 \ge\frac{|D|-2x+1}{2}\\
 &\ge\frac{p+c-4x+2}{4}
 =\frac{q-5x+c+2}{4}
 >\frac{q-5x}{4}.
\end{align*}

Call the remaining components $K_1,\ldots,K_a$ and define
\[
 S_j=\{v\in D:B_v\text{ is a subpath of }K_j\}.
\]
These sets are pairwise disjoint and each contains at least two elements. Since $K_j$ contains the arc $A_v$ for every $v\in S_j$, and these arcs have disjoint edges,
\[
 |K_j|=|E(K_j)|\ge\sum_{v\in S_j}|E(A_v)|
 \ge\frac23\sum_{v\in S_j}|C_v|.
\]
Because every other part of $K_j$ lies on prescribed transitions or on cycles indexed by $X$, the disjointness of the prescribed paths also gives the stated disjointness from all unused original cycles outside $X$.
\end{proof}

\section{Cycle replacement and total length}\label{sec:round}

For a family $\CC$ of disjoint cycles, define
\begin{equation}\label{eq:mass}
 M(\CC)=\sum_{C\in\CC}|C|.
\end{equation}
We use the total length to measure how much of the family survives
each replacement. The aim is to reduce the number of cycles while
retaining a proportion of the total length bounded below by a the ratio of the new and old cycle length counts.

\subsection{Theta replacements}
Consider a theta in an auxiliary multigraph realized by a packing of transitions. Define its \emph{support} to be
the set
\[
    V(\Theta)
\]
of its vertices. Since the vertices of the auxiliary multigraph index the
original cycles, the cycle support of $\Theta$ is the family
\[
    \mathcal S(\Theta)=\{C_v:v\in V(\Theta)\}.
\]
 The two cycles indexed by its common endpoints are called the \emph{branch cycles} and the cycles indexed by the other vertices of a branch are its \emph{internal cycles}. Choose two of the three branches. Replace each auxiliary edge on them by its prescribed transition path, and on every cycle visited join the two selected ports by a longer arc. These paths and arcs form a cycle in $G$. The disjointness of the transitions and original cycles ensures that no vertex is repeated. Retaining also the original internal cycles on the omitted branch gives a family of disjoint cycles. 

\begin{figure}[htbp]
\centering
\begin{tikzpicture}[
  x=1cm,y=1cm,
  chosen/.style={line width=1.05pt,line cap=round},
  omitted/.style={draw=black!30,line width=.5pt,dashed,line cap=round},
  end/.style={circle,fill=black,inner sep=1.05pt},
  pathlabel/.style={font=\small},
  cyclelabel/.style={font=\small}
]
\definecolor{cycleone}{RGB}{150,48,43}
\definecolor{cycletwo}{RGB}{38,82,119}
\definecolor{cyclethree}{RGB}{44,101,75}

\begin{scope}[xshift=0cm]
  \coordinate (u) at (0,0);
  \coordinate (v) at (4,0);
  \draw[chosen,draw=cycleone] (u) .. controls (1,1.25) and (3,1.25) .. (v);
  \draw[chosen,draw=cycleone] (u) -- (v);
  \draw[omitted] (u) .. controls (1,-1.25) and (3,-1.25) .. (v);
  \node[end] at (u) {};
  \node[end] at (v) {};
  \node[pathlabel,above] at (2,.85) {$P_1$};
  \node[pathlabel,above] at (2,.02) {$P_2$};
  \node[pathlabel,below,text=black!30] at (2,-.85) {$P_3$};
  \node[pathlabel,below left=1pt] at (u) {$u$};
  \node[pathlabel,below right=1pt] at (v) {$v$};
  \node[cyclelabel,text=cycleone] at (2,-1.55)
    {\textup{(a)} $P_1\cup P_2$};
\end{scope}

\begin{scope}[xshift=5.35cm]
  \coordinate (u) at (0,0);
  \coordinate (v) at (4,0);
  \draw[chosen,draw=cycletwo] (u) .. controls (1,1.25) and (3,1.25) .. (v);
  \draw[omitted] (u) -- (v);
  \draw[chosen,draw=cycletwo] (u) .. controls (1,-1.25) and (3,-1.25) .. (v);
  \node[end] at (u) {};
  \node[end] at (v) {};
  \node[pathlabel,above] at (2,.85) {$P_1$};
  \node[pathlabel,above,text=black!30] at (2,.02) {$P_2$};
  \node[pathlabel,below] at (2,-.85) {$P_3$};
  \node[pathlabel,below left=1pt] at (u) {$u$};
  \node[pathlabel,below right=1pt] at (v) {$v$};
  \node[cyclelabel,text=cycletwo] at (2,-1.55)
    {\textup{(b)} $P_1\cup P_3$};
\end{scope}

\begin{scope}[xshift=10.70cm]
  \coordinate (u) at (0,0);
  \coordinate (v) at (4,0);
  \draw[omitted] (u) .. controls (1,1.25) and (3,1.25) .. (v);
  \draw[chosen,draw=cyclethree] (u) -- (v);
  \draw[chosen,draw=cyclethree] (u) .. controls (1,-1.25) and (3,-1.25) .. (v);
  \node[end] at (u) {};
  \node[end] at (v) {};
  \node[pathlabel,above,text=black!30] at (2,.85) {$P_1$};
  \node[pathlabel,above] at (2,.02) {$P_2$};
  \node[pathlabel,below] at (2,-.85) {$P_3$};
  \node[pathlabel,below left=1pt] at (u) {$u$};
  \node[pathlabel,below right=1pt] at (v) {$v$};
  \node[cyclelabel,text=cyclethree] at (2,-1.55)
    {\textup{(c)} $P_2\cup P_3$};
\end{scope}
\end{tikzpicture}
\caption{The three cycles obtained by choosing two branches of a theta. In each panel, the solid colored branches form the selected cycle and the omitted branch is shown in gray.}
\label{fig:theta-choices}
\end{figure}

\begin{samepage}
\begin{lemma}\label{lem:theta}
Let $\mathcal S$ be the family of cycles indexed by a theta support,
and write $s=|\mathcal S|$. When branch $j\in\{1,2,3\}$ is omitted, let
$\mathcal S_j$ be the family just described, and put
\[
 \ell_j=M(\mathcal S)-M(\mathcal S_j),
 \qquad d_j=s-|\mathcal S_j|.
\]
Then
\begin{equation}\label{eq:thetasum}
 \sum_{j=1}^3\ell_j\le M(\mathcal S),
 \qquad \sum_{j=1}^3d_j=2s-1\ge\frac32s.
\end{equation}
For each choice $j$, the decrease $d_j$ is positive, the selected
family is disjoint from every
original cycle outside the support, and the minimum order of its cycles
is at least the minimum order in $\mathcal S$.
\end{lemma}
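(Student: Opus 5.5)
Throughout, I set up the theta as two branch cycles $C_u,C_v$ together with three branches whose internal vertex sets $I_1,I_2,I_3\subseteq V(\Theta)$ are pairwise disjoint, with $|I_j|=s_j$. Then $s=2+s_1+s_2+s_3$, and for a subfamily $\mathcal I_j$ I write $M(\mathcal I_j)$ for its total length. The proof is direct bookkeeping: I first pin down exactly what $\mathcal S_j$ is, and then compute $\ell_j$ and $d_j$.

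When branch $j$ is omitted, $\mathcal S_j$ consists of one new cycle $K_j$ together with the untouched internal cycles $\{C_w:w\in I_j\}$. The cycle $K_j$ runs through the two chosen branches and contains, for each visited cycle $C_w$, the longer arc between its two selected ports. There are two of these ports on each internal cycle, and also two on $C_u$ and two on $C_v$, since each branch cycle meets both chosen branches. Hence $|\mathcal S_j|=1+s_j$, so
\[
 d_j=s-1-s_j=1+\sum_{i\ne j}s_i\ge1>0 .
\]
Summing over $j$ gives $\sum_j d_j=3(s-1)-(s-2)=2s-1$. Since $s\ge2$, we get $2s-1\ge\tfrac32 s$.

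For the loss, every visited cycle $C_w$ contributes to $K_j$ a longer arc of at least $|C_w|/2$ edges, and the transition paths contribute more. So $|K_j|\ge\tfrac12\sum_{w\notin I_j}|C_w|$, which gives
\[
 \ell_j=\sum_{w\notin I_j}|C_w|-|K_j|\le\tfrac12\bigl(|C_u|+|C_v|\bigr)+\tfrac12\sum_{i\ne j}M(\mathcal I_i).
\]
Summing over $j$, the branch cycles contribute $\tfrac32(|C_u|+|C_v|)$ and each internal family contributes $M(\mathcal I_i)$ (it is counted twice with weight $1/2$). This exceeds $M(\mathcal S)$ by $\tfrac12(|C_u|+|C_v|)$, so the crude half-arc bound is not enough. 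This is the main obstacle.

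To overcome it, I will use the fact that on $C_u$ the three branches give three distinct ports, which split $C_u$ into three arcs of lengths $x_1+x_2+x_3=|C_u|$. Omitting a branch uses two of the ports, and $K_j$ traverses the longer of the two complementary arcs, so the amount of $C_u$ lost is the shorter one. That shorter arc has length at most the single arc $x_i$ between the two chosen ports. As $j$ ranges over the three choices, the three relevant arcs are exactly $x_1,x_2,x_3$, so the total loss on $C_u$ is at most $|C_u|$, and likewise for $C_v$.

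An internal cycle $C_w$ on branch $i$ is used only when $j\ne i$, which happens twice. Each time it loses at most $|C_w|/2$, so its total loss is at most $|C_w|$. The transition edges are never counted as a loss. Adding these gives $\sum_j\ell_j\le M(\mathcal S)$. (If a branch is a single parallel edge it has no internal cycles, and the same count applies with $s_i=0$.)

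The remaining claims are routine. $K_j$ uses only arcs of cycles in the support and the prescribed transitions, and these are disjoint from all other original cycles and from the retained internal cycles of the omitted branch. This gives disjointness from every cycle outside the support and simplicity of $K_j$, as already noted before the lemma. For the minimum order, the retained cycles are original members of $\mathcal S$. For $K_j$, I bound $|K_j|\ge|C_u|-x_i+|C_v|-y_i$ where $x_i,y_i$ are the shorter arcs, and this is at least $|C_u|/2+|C_v|/2\ge\min_{C\in\mathcal S}|C|$.
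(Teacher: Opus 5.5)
Your proof is correct and follows the paper's argument. On each branch cycle you bound the loss by the arc between the two chosen ports that avoids the third port; this is the complement of the paper's observation that the retained longer arc is at least the arc containing the third port, so in both versions the three losses sum to at most $|C_u|$, while each internal cycle loses at most half its order in each of the two choices that use it. Your count $\sum_j d_j=2s-1$, the disjointness claim, and the minimum-order bound via the two half-arcs on the branch cycles also match the paper.
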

\end{samepage}

\begin{proof}
On either branch cycle, consider, for each pair of ports, the arc between them that contains the third port. The longer arc used by the new cycle is at least as long as this arc. These three comparison arcs contain every edge twice, so the branch
cycle contributes at least twice its order to the sum of the three
output lengths. An internal cycle makes the same total
contribution or more, since it is retained whole once and contributes
a longer arc in each of the other two choices. The transition paths
only increase the output lengths, so
$\sum_jM(\mathcal S_j)\ge2M(\mathcal S)$, proving the first inequality.

\bigskip

Let $b_j$ be the number of internal cycles on branch $j$.
Then $\sum_jb_j=s-2$ and $|\mathcal S_j|=1+b_j$. Hence
$\sum_jd_j=3s-3-(s-2)=2s-1$. Since $b_j\le s-2$,
each $d_j\ge1$ and $2s-1\ge3s/2$ because $s\ge2$.

The disjointness assertions follow from the definition of a realization, while
the minimum order is preserved because each new cycle contains a longer arc
from each of two branch cycles. When all input cycles have order at least
$L$, these arcs together have at least $L$ edges, and every retained
original cycle already has order at least $L$.
\end{proof}

\subsection{Two inequalities}
The theta and smoothing estimates will be combined using the constants
\begin{equation}\label{eq:rhozet}
 \rho=\frac{86}{95},\qquad \zeta=\frac{99}{2500},
 \qquad \sigma(a)=\frac{1-5a}{4}.
\end{equation}

\begin{samepage}
\begin{lemma}\label{lem:numerical}
The constants in \eqref{eq:rhozet} satisfy
\begin{equation}\label{eq:thetaparameter}
 \zeta^{\rho-1}<\frac{3\rho}{2}.
\end{equation}
For every $0\le a\le\zeta$ and $\sigma(a)\le s\le1-a$,
\begin{equation}\label{eq:smoothparameter}
 \left(1-a-\frac s2\right)^\rho
 +\frac23a^\rho+\frac13(a+s)^\rho\le1.
\end{equation}
\end{lemma}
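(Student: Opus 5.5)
The plan is to reduce both inequalities to finitely many explicit numerical checks, using concavity of $x\mapsto x^\rho$ to locate maxima.

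\emph{First inequality.} Since $\rho-1=-9/95$ and $\zeta=99/2500$, inequality \eqref{eq:thetaparameter} is equivalent to $(2500/99)^{9/95}<129/95$. Raising to the $95$th power, this becomes the integer comparison
\[
2500^{9}\cdot 95^{95}<99^{9}\cdot 129^{95}.
\]
In logarithms the margin is small, roughly $\log(1.357895/1.357843)\approx 4\times10^{-5}$. So the estimate must be rigorous: either compare the two integers exactly, or bound $\log(2500/99)$ and $\log(129/95)$ by truncated series with explicit error terms accurate to about $10^{-7}$.

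\emph{Second inequality: reduce to $s=\sigma(a)$.} Fix $a\in[0,\zeta]$ and let
\[
f(a,s)=\left(1-a-\tfrac s2\right)^\rho+\tfrac23a^\rho+\tfrac13(a+s)^\rho .
\]
Then
\[
\partial_s f=\rho\left(\tfrac13(a+s)^{\rho-1}-\tfrac12\left(1-a-\tfrac s2\right)^{\rho-1}\right).
\]
This is negative exactly when
\[
\frac{a+s}{1-a-s/2}>\left(\tfrac32\right)^{-95/9}\approx 0.014 .
\]
On the admissible range, $a+s\ge a+\sigma(a)=(1-a)/4\ge(1-\zeta)/4>0.24$ and $1-a-s/2\le1$. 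Hence the ratio exceeds $0.24$, and $f$ is strictly decreasing in $s$ on $[\sigma(a),1-a]$. (When $s=1-a$ the first term is still positive since $1-a-s/2=(1-a)/2>0$.) It therefore suffices to treat $s=\sigma(a)$. There $1-a-\sigma(a)/2=\tfrac78-\tfrac38a$ and $a+\sigma(a)=\tfrac{1-a}{4}$, so we must show
\[
g(a)=\left(\tfrac78-\tfrac38a\right)^\rho+\tfrac23a^\rho+\tfrac13\left(\tfrac{1-a}4\right)^\rho\le1,\qquad 0\le a\le\zeta .
\]

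\emph{One-variable check.} Each term of $g$ is a concave function of an affine expression, so $g$ is concave on $[0,\zeta]$. I will show $g'(\zeta)>0$. At $a=\zeta$, the positive term $\tfrac23\rho\,\zeta^{\rho-1}$ is about $0.82$, using $\zeta^{\rho-1}<3\rho/2$ from the first part together with a matching lower bound. The two negative terms total about $0.42$. By concavity $g'$ is decreasing, so $g'>0$ on all of $[0,\zeta]$, $g$ is increasing, and its maximum is $g(\zeta)$.

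The remaining task, and the main obstacle, is the single evaluation $g(\zeta)\le1$. A rough computation gives
\[
g(\zeta)\approx 0.8725+0.0358+0.0916\approx 0.9999,
\]
so the margin is about $10^{-4}$ and the constants were evidently tuned to make this nearly tight. I would certify it with rigorous rational upper bounds for each term:
\begin{itemize}
\item $(0.86015)^{\rho}$ and $(0.240099)^{\rho}$ via $x^\rho=\exp(\rho\log x)$, using truncated alternating series for $\log$ and $\exp$ with explicit tail bounds;
\item $\zeta^\rho=\zeta\cdot\zeta^{\rho-1}<\zeta\cdot\tfrac{3\rho}{2}$, taken directly from \eqref{eq:thetaparameter}, which conveniently reuses the first part.
\end{itemize}
Interval arithmetic to about six digits per term should leave enough room to conclude.
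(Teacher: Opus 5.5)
Your proposal is correct and follows essentially the paper's route: the same integer comparison $2500^9\cdot95^{95}<99^9\cdot129^{95}$, then monotonicity in $s$ to reduce to $s=\sigma(a)$, then monotonicity in $a$ to reduce to $a=\zeta$, then certified numerical bounds at $a=\zeta$. The paper differs only in minor ways: it bounds the $a$-derivative uniformly, using $a^{-9/95}\ge1$ and the fact that $\tfrac78-\tfrac38a$ and $\tfrac{1-a}{4}$ exceed $6/25$, rather than using concavity plus the sign at $a=\zeta$; and it certifies rational upper bounds for the three powers by raising them to the $95$th power.

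One correction to your estimate: $g(\zeta)\approx0.99998$, so the true margin is only about $2\times10^{-5}$, not $10^{-4}$. Your planned six-digit accuracy still leaves enough room, and so does the estimate $\zeta^\rho<\tfrac{3\rho}{2}\zeta$, which costs only about $1.3\times10^{-6}$ after the weight $\tfrac23$.
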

\end{samepage}

\begin{proof}
The first inequality follows from the integer comparison
\begin{equation}\label{eq:exacttheta}
 2500^9\,95^{95}<99^9\,129^{95}.
\end{equation}
To prove the second, define
\[
 f_a(s)=1-\left(1-a-\frac s2\right)^\rho
             -\frac23a^\rho-\frac13(a+s)^\rho.
\]
Put $B=1-a-s/2$ and $T=a+s$. On the stated domain,
$T\ge(1-a)/4>6/25$ and $0<B\le1$. Since
\[
 \frac BT<\frac{25}{6}<\left(\frac32\right)^{95/9},
\]
we have
\[
 \frac1\rho\frac{\partial f_a}{\partial s}
 =\frac12B^{-9/95}-\frac13T^{-9/95}>0.
\]
It is therefore enough to set $s=\sigma(a)$.

Let $g(a)=f_a(\sigma(a))$. At this value of $s$,
\[
 B(a)=\frac{7-3a}{8},\qquad T(a)=\frac{1-a}{4}.
\]
For $0<a\le\zeta$, differentiation gives
\begin{equation}\label{eq:gderivative}
 \frac{g'(a)}\rho
 =\frac38B(a)^{-9/95}+\frac1{12}T(a)^{-9/95}
                  -\frac23a^{-9/95}<0.
\end{equation}
Indeed, $B(a),T(a)>6/25$ and $9/95<1/10$, so both of their negative
powers are less than $(25/6)^{1/10}<6/5$. Also $a^{-9/95}\ge1$.
The right side of \eqref{eq:gderivative} is therefore less than
$11/20-2/3<0$. The inequality $(25/6)^{1/10}<6/5$ follows by raising
to the tenth power.

The monotonicity of $g$ reduces the remaining estimate to
$g(\zeta)>0$, for which we use the values
\[
 \sigma(\zeta)=\frac{401}{2000},\qquad
 B(\zeta)=\frac{17203}{20000},\qquad
 T(\zeta)=\frac{2401}{10000}.
\]
The following rational upper bounds suffice:
\begin{equation}\label{eq:exactbounds}
 \begin{aligned}
 \left(\frac{17203}{20000}\right)^{86/95}
     &<\frac{872515}{10^6},\\
 \left(\frac{99}{2500}\right)^{86/95}
     &<\frac{53771}{10^6},\\
 \left(\frac{2401}{10000}\right)^{86/95}
     &<\frac{274848}{10^6}.
 \end{aligned}
\end{equation}
Each bound follows by raising both sides to the ninety-fifth power and clearing denominators. Taking their weighted sum gives
\[
 B(\zeta)^\rho+\frac23\zeta^\rho+\frac13T(\zeta)^\rho
 <\frac{599987}{600000}<1.
\]
Since $g$ is decreasing, $g(a)\ge g(\zeta)>0$ throughout
$0<a\le\zeta$, and continuity extends this conclusion to $a=0$.
\end{proof}

\subsection{One replacement round}

\begin{samepage}
\begin{proposition}\label{prop:round}
Let $t>14$, and let $G$ be a $t$-tough graph containing a family
$\CC$ of $q\ge2$ vertex-disjoint cycles, each of order at least $B_t$.
Then $G$ contains a nonempty family $\CC'$ of fewer than $q$
vertex-disjoint cycles satisfying
\begin{equation}\label{eq:roundconclusion}
 \frac{M(\CC')}{M(\CC)}\ge
 \left(\frac{|\CC'|}{q}\right)^\rho.
\end{equation}
Every cycle in $\CC'$ has order at least the minimum order of a cycle
in $\CC$.
\end{proposition}
\end{samepage}

\begin{proof}
Use \cref{lem:ports} with $b=7$ to obtain a transition packing whose auxiliary multigraph $Q$ has minimum degree at least seven. Choose an inclusion-maximal family
of vertex-disjoint thetas in $Q$, and let $X$ be the union of their
supports. Put
\[
 q=|\CC|,\qquad M=M(\CC),\qquad
 a=\frac{|X|}{q},\qquad
 y=\frac{1}{M}\sum_{v\in X}|C_v|.
\]
We have $a>0$, since otherwise $Q$ would be a cactus and
\cref{lem:cactusbound} would contradict its minimum degree.

\bigskip

First suppose $a\ge\zeta$. On each theta support, choose an omitted
branch independently and uniformly, and perform the replacement in
\cref{lem:theta}. Retain every original cycle outside $X$, and denote the resulting family by $\CC'$.
Put $\ell=M-M(\CC')$ and $d=q-|\CC'|$, and let $\mathbb E$ denote expectation over the choices of omitted branches. By \eqref{eq:thetasum},
\[
 \mathbb E\ell\le\frac{yM}{3},\qquad
 \mathbb E d\ge\frac{aq}{2}.
\]
If $y\le3\rho a/2$, then
$\mathbb E(\ell-\rho Md/q)\le0$. Some choice therefore satisfies
$\ell\le\rho Md/q$. For this choice, concavity gives
\[
 \frac{M(\CC')}{M}\ge1-\rho\frac dq
 \ge\left(1-\frac dq\right)^\rho
 =\left(\frac{|\CC'|}{q}\right)^\rho.
\]
Every theta replacement decreases the cycle count, and all selected
families are disjoint because their supports and prescribed transitions are disjoint. If instead $y>3\rho a/2$,
then \eqref{eq:thetaparameter} and the monotonicity of $a^{\rho-1}$ give
$y>a^\rho$. Retaining only the original cycles indexed by $X$,
this is a proper subfamily: when $a=1$, we have $y=1<3\rho/2$, so $a\neq 1$.

Now suppose $0<a<\zeta$. Maximality of the theta family implies that
$Q-X$ contains no theta. Apply \cref{lem:smoothing}, and let
$Y=\bigcup_j S_j$ be the set of indices used by the cycles $K_j$ supplied by the lemma.
Write
\[
 s=\frac{|Y|}{q},\qquad
 u=\frac{1}{M}\sum_{v\in X}|C_v|,\qquad
 v=\frac{1}{M}\sum_{w\in Y}|C_w|.
\]
By \eqref{eq:smoothingcount}, $s>\sigma(a)>0$. The sets $X$ and $Y$
are disjoint, so $a+s\le1$.

\bigskip

If $u\ge a^\rho$, retain only the original cycles indexed by $X$.
If $a+s<1$ and $u+v\ge(a+s)^\rho$, retain only those indexed by
$X\cup Y$. Each choice is a nonempty proper subfamily satisfying
\eqref{eq:roundconclusion}. We may therefore assume
\begin{equation}\label{eq:pooledmass}
 u<a^\rho,\qquad u+v\le(a+s)^\rho.
\end{equation}
The second inequality is automatic when $a+s=1$, in that case we do
not select the whole original family.

\bigskip

Take all the cycles $K_j$ supplied by \cref{lem:smoothing},
discard the original cycles indexed by $X$, and retain every original
cycle outside $X\cup Y$. The selected cycles are disjoint by the last assertion of that lemma. Each new cycle accounts for
at least two members of $Y$, and the new cycles retain at least two
thirds of the total length on $Y$. Thus
\begin{align*}
 \frac{|\CC'|}{q}&\le1-a-\frac s2,\\
 \frac{M(\CC')}{M}&\ge1-u-\frac v3
 =1-\frac23u-\frac13(u+v)\\
 &>1-\frac23a^\rho-\frac13(a+s)^\rho
 \ge\left(1-a-\frac s2\right)^\rho.
\end{align*}
The final step uses \cref{lem:numerical}, giving the length estimate
in \eqref{eq:roundconclusion}. Since $s>0$, at least one cycle $K_j$ is present, so the selected family is nonempty and the bound on
its count shows that it has strictly fewer than $q$ cycles.

\bigskip

The minimum cycle order is preserved in the theta case by
\cref{lem:theta} and in the smoothing case because each cycle $K_j$ retains at least two thirds of the sum of at least two input
orders, giving at least four thirds of the input cycles minimum. Together
with the retained original cycles, the selected cycles therefore satisfy
the required minimum order.
\end{proof}

\section{Iteration and the circumference bound}\label{sec:iteration}

\begin{samepage}
\begin{proposition}\label{prop:iterate}
Let $t>14$, and let $G$ be a $t$-tough graph containing a nonempty
family $\CC_0$ of $q_0$ vertex-disjoint cycles, each of order at least
$B_t$. If $M_0=M(\CC_0)$, then the circumference $F$ of $G$ satisfies
\begin{equation}\label{eq:telescope}
 F\ge M_0q_0^{-\rho}.
\end{equation}
\end{proposition}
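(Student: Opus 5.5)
The plan is to iterate \cref{prop:round} until a single cycle remains, and to telescope the length ratios \eqref{eq:roundconclusion}.

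Set $\CC_0$ as given. As long as the current family $\CC_i$ has $q_i=|\CC_i|\ge2$ cycles, apply \cref{prop:round} to obtain $\CC_{i+1}$. This new family is nonempty, has $q_{i+1}<q_i$ cycles, and satisfies
\[
 \frac{M(\CC_{i+1})}{M(\CC_i)}\ge\left(\frac{q_{i+1}}{q_i}\right)^\rho .
\]
By the last assertion of \cref{prop:round}, every cycle of $\CC_{i+1}$ still has order at least the minimum order in $\CC_i$. By induction, every cycle has order at least $B_t$, so the hypothesis of \cref{prop:round} holds again at the next step. Since the counts $q_i$ strictly decrease and stay positive, the process stops after finitely many rounds, at some family $\CC_r$ with $q_r=1$. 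If $q_0=1$ we take $r=0$.

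Multiplying the ratios, the product telescopes:
\[
 \frac{M(\CC_r)}{M_0}\ge\prod_{i=0}^{r-1}\left(\frac{q_{i+1}}{q_i}\right)^\rho=\left(\frac{q_r}{q_0}\right)^\rho=q_0^{-\rho}.
\]
The family $\CC_r$ consists of a single cycle $C$ of $G$, so $F\ge|C|=M(\CC_r)\ge M_0q_0^{-\rho}$, which is \eqref{eq:telescope}.

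There is no real obstacle once \cref{prop:round} is available. The only points to check are that the invariant ``all cycles have order at least $B_t$'' survives each round, which \cref{prop:round} guarantees, and that the process terminates, which follows from the strict decrease of the cycle count together with nonemptiness. Nonemptiness is what ensures the process ends at exactly one cycle rather than at zero.
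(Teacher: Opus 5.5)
Your proposal is correct and follows essentially the same route as the paper's proof. Both iterate \cref{prop:round}, maintain the invariant that every cycle has order at least $B_t$, use the strict decrease and nonemptiness to reach $q_N=1$, telescope \eqref{eq:roundconclusion}, and bound the final single cycle by $F$.
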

\end{samepage}

\begin{proof}
When $q_0=1$, the family already consists of a single cycle of order
$M_0\le F$. For $q_0>1$, apply \cref{prop:round} repeatedly, writing
$q_i=|\CC_i|$ and
$M_i=M(\CC_i)$. The minimum cycle order stays at least $B_t$, so the
proposition remains applicable. Every round strictly decreases the
positive integer $q_i$, and the process ends with $q_N=1$. Multiplying
\eqref{eq:roundconclusion} telescopes and gives
\[
 M_N\ge M_0\prod_{i=0}^{N-1}
 \left(\frac{q_{i+1}}{q_i}\right)^\rho
 =M_0q_0^{-\rho}.
\]
The final family contains one cycle, so $M_N\le F$.
\end{proof}

\begin{proof}[Proof of \Cref{thm:main}]
Fix $t>14$ and use $r_t,A_t$ from \eqref{eq:rt}. Define
\begin{equation}\label{eq:delta}
 \delta_t=\frac{1-\rho}{1+(r_t+1)(1-\rho)}
 =\frac{9}{95+9(r_t+1)}.
\end{equation}
Let $G$ have order $n\ge3$ and circumference $F$.
If \eqref{eq:initiallarge} holds, then
\[
 F>a_t n^{1/(r_t+1)},\qquad
 a_t=(2A_t)^{-1/(r_t+1)}.
\]
Since $\delta_t<1/(r_t+1)$ and $n\ge1$, this implies
$F\ge a_tn^{\delta_t}$.

Otherwise \cref{prop:initial} gives a family $\CC_0$ with
\[
 q_0\ge\frac{n}{2A_tF^{r_t+1}},
 \qquad M_0\ge L_{r_t}q_0.
\]
By \cref{prop:iterate},
\[
 F\ge L_{r_t}q_0^{1-\rho}
 \ge L_{r_t}\left(\frac{n}{2A_tF^{r_t+1}}\right)^{1-\rho}.
\]
Rearranging yields
\[
 F\ge b_t n^{\delta_t},\qquad
 b_t=L_{r_t}^{1/[1+(r_t+1)(1-\rho)]}(2A_t)^{-\delta_t}.
\]
Taking $c_t=\min\{a_t,b_t\}>0$ proves \eqref{eq:maint}.
For $t=15$, \eqref{eq:15constants} gives $r_{15}=8$, so
$\delta_{15}=9/(95+81)=9/176$, yielding the bound in
\eqref{eq:main15}.
\end{proof}

\begin{samepage}
\begin{corollary}\label{cor:larget}
For every fixed $t\ge266/5$, there is a constant $c_t>0$ such that
every $t$-tough graph of order $n\ge3$ has circumference at least
$c_t n^{9/131}$.
\end{corollary}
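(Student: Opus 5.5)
The plan is to read off the exponent from the general bound established in the proof of \cref{thm:main}. For every fixed $t>14$, that proof gives $\circum(G)\ge c_t n^{\delta_t}$ with
\[
 \delta_t=\frac{9}{95+9(r_t+1)},\qquad r_t=\min\{i\ge0:L_i\ge B_t\}.
\]
The exponent $9/131$ corresponds to $95+9(r_t+1)=131$, that is, $r_t=3$. So the whole task is to show that $r_t=3$ exactly when $t\ge266/5$. It would also suffice to show $r_t\le 3$, since $\delta_t$ decreases in $r_t$ and $n^{\delta_t}$ only gets larger.

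\emph{Computing $r_t$.} From \eqref{eq:lengthsequence} we have $L_0=3$, $L_1=6$, $L_2=11$ and $L_3=19$. By \eqref{eq:threshold}, $B_t=\max\{14,14t/(t-14)\}\ge14$, so $L_2<B_t$ and therefore $r_t\ge3$. For the reverse inequality we need $L_3=19\ge B_t$. The bound $19\ge14$ is trivial. Since $t>14$, the bound $19\ge 14t/(t-14)$ is equivalent to $19(t-14)\ge14t$, i.e. $5t\ge266$. This is exactly the hypothesis $t\ge266/5$, so $r_t=3$.

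\emph{Applying the theorem's argument.} Since $266/5>14$, all the earlier results apply: \cref{prop:initial}, \cref{prop:round}, \cref{prop:iterate}, and the case analysis in the proof of \cref{thm:main}. The rounds used are $k_1=3$, $k_2=6$, $k_3=11$, and the condition $t>k_i-1$ holds for each. This gives $A_t=D_{t,3}D_{t,6}D_{t,11}$, a finite positive constant depending only on $t$. The two cases in the proof of \cref{thm:main} then give:
\begin{itemize}
\item either $F>(n/2A_t)^{1/4}\ge a_t n^{9/131}$, using $9/131<1/4$;
\item or $F\ge b_t n^{\delta_t}$, where $\delta_t=9/(95+36)=9/131$ and $b_t=19^{1/[1+4(1-\rho)]}(2A_t)^{-9/131}$.
\end{itemize}
Taking $c_t=\min\{a_t,b_t\}>0$ finishes the argument.

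There is no real obstacle here. The only points needing care are checking that the equivalence $19(t-14)\ge14t\iff t\ge266/5$ is used with the correct direction, since $t-14>0$, and confirming that $L_2=11$ falls below the floor $14$ in $B_t$. That second check is what pins the exponent at $9/131$ rather than something better.
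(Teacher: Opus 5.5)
Your proposal is correct and matches the paper's proof: the paper also notes that $t\ge 266/5$ is equivalent to $14t/(t-14)\le 19$, deduces $r_t=3$ from $L_2=11<14\le B_t\le 19=L_3$, and substitutes into $\delta_t=9/(95+9(r_t+1))$ to get $9/131$. Your explicit two-case computation of $a_t$ and $b_t$ is simply the proof of \cref{thm:main} specialized to $r_t=3$.
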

\end{samepage}

\begin{proof}
The inequality $t\ge266/5$ is equivalent to $14t/(t-14)\le19$.
Since $B_t>14>L_2$ and $L_3=19$, we have $r_t=3$, and substitution
in \eqref{eq:delta} gives the exponent $9/131$.
\end{proof}

\section*{Acknowledgements}\label{sec:acknowledgements}
This work was carried out during the first authors two-month stay at the University of California, San Diego. 
\begin{comment}
The author is deeply grateful to Jacques Verstraete for very many helpful and fruitful discussions, without which this work would not have been possible.
\end{comment}


\begin{thebibliography}{99}

\bibitem{ToughnessSurvey}
D. Bauer, H.~J. Broersma, and E. Schmeichel,
\emph{Toughness in graphs---A survey},
Graphs and Combinatorics \textbf{22} (2006), no.~1, 1--35.
\href{https://doi.org/10.1007/s00373-006-0649-0}{doi:10.1007/s00373-006-0649-0}.

\bibitem{BBV}
D. Bauer, H.~J. Broersma, and H.~J. Veldman,
\emph{Not every $2$-tough graph is Hamiltonian},
Discrete Applied Mathematics \textbf{99} (2000), nos.~1--3, 317--321.
\href{https://doi.org/10.1016/S0166-218X(99)00141-9}{doi:10.1016/S0166-218X(99)00141-9}.

\bibitem{BohmePlanar}
T. B\"ohme, H.~J. Broersma, and H.~J. Veldman,
\href{https://ris.utwente.nl/ws/files/6852394/Bohme96toughness.pdf}{\emph{Toughness and longest cycles in $2$-connected planar graphs}},
Journal of Graph Theory \textbf{23} (1996), no.~3, 257--263.


\bibitem{BHVJ}
H.~J. Broersma, J. van den Heuvel, H.~A. Jung, and H.~J. Veldman,
\emph{Long paths and cycles in tough graphs},
Graphs and Combinatorics \textbf{9} (1993), 3--17.
\href{https://doi.org/10.1007/BF01195323}{doi:10.1007/BF01195323}.

\bibitem{ChenChordal}
G. Chen, M.~S. Jacobson, A.~E. K\'ezdy, and J. Lehel,
\href{https://onlinelibrary.wiley.com/doi/10.1002/(SICI)1097-0037(199801)31:1<29::AID-NET4>3.0.CO;2-M}{\emph{Tough enough chordal graphs are Hamiltonian}},
Networks \textbf{31} (1998), no.~1, 29--38.

\bibitem{CY} G. Chen, X. Yu, \emph{Long Cycles in 3-Connected Graphs},
Journal of Combinatorial Theory, Series B,
Volume 86, Issue 1, 2002, Pages 80--99,



\bibitem{Chvatal}
V. Chv\'atal,
\emph{Tough graphs and hamiltonian circuits},
Discrete Mathematics \textbf{5} (1973), no.~3, 215--228.
\href{https://doi.org/10.1016/0012-365X(73)90138-6}{doi:10.1016/0012-365X(73)90138-6}.

\bibitem{EJKS}
H. Enomoto, B. Jackson, P. Katerinis, and A. Saito,
\emph{Toughness and the existence of $k$-factors},
Journal of Graph Theory \textbf{9} (1985), no.~1, 87--95.
\href{https://doi.org/10.1002/jgt.3190090106}{doi:10.1002/jgt.3190090106}.

\bibitem{Hasanvand}
M. Hasanvand,
\emph{Spanning trees and spanning closed walks with small degrees},
Discrete Mathematics \textbf{345} (2022), no.~10, article~112998.
\href{https://doi.org/10.1016/j.disc.2022.112998}{doi:10.1016/j.disc.2022.112998}.

\bibitem{JacksonWormald}
B. Jackson and N.~C. Wormald,
\emph{$k$-walks of graphs},
Australasian Journal of Combinatorics \textbf{2} (1990), 135--146.

\bibitem{JW} B. Jackson and N. C. Wormald, \emph{Longest cycles in 3‑connected graphs of bounded maximum degree}, In R. Rees (Ed.), Graphs, Matrices and Designs (1992) (Vol. 139, pp. 237--254). 

\bibitem{JungWittmann}
H.~A. Jung and P. Wittmann,
\href{https://ris.utwente.nl/ws/files/283018877/Jung1999longest.pdf}{\emph{Longest cycles in tough graphs}},
Journal of Graph Theory \textbf{31} (1999), no.~2, 107--127.


\bibitem{KabelaLong}
A. Kabela,
\emph{Long paths and toughness of $k$-trees and chordal planar graphs},
Discrete Mathematics \textbf{342} (2019), no.~1, 55--63.
\href{https://doi.org/10.1016/j.disc.2018.08.017}{doi:10.1016/j.disc.2018.08.017}.

\bibitem{KabelaKaiser}
A. Kabela and T. Kaiser,
\emph{$10$-tough chordal graphs are Hamiltonian},
Journal of Combinatorial Theory, Series B \textbf{122} (2017), 417--427.
\href{https://doi.org/10.1016/j.jctb.2016.07.002}{doi:10.1016/j.jctb.2016.07.002}.

\bibitem{LesniakSchmeichel}
L. Lesniak and E. Schmeichel,
\emph{The circumference of $2$-tough graphs},
Discrete Applied Mathematics \textbf{389} (2026), 280--284.
\href{https://doi.org/10.1016/j.dam.2026.03.052}{doi:10.1016/j.dam.2026.03.052}.

\bibitem{LiuYuZhang}
Q. Liu, X. Yu, and Z. Zhang,
\emph{Circumference of $3$-connected cubic graphs},
Journal of Combinatorial Theory, Series B \textbf{128} (2018), 134--159.
\href{https://doi.org/10.1016/j.jctb.2017.08.008}{doi:10.1016/j.jctb.2017.08.008}.

\bibitem{Mader}
W. Mader,
\emph{\"Uber die Maximalzahl kreuzungsfreier $H$-Wege},
Archiv der Mathematik \textbf{31} (1978), 387--402.

\bibitem{Menger}
K. Menger,
\emph{Zur allgemeinen Kurventheorie},
Fundamenta Mathematicae \textbf{10} (1927), 96--115.
\href{https://doi.org/10.4064/fm-10-1-96-115}{doi:10.4064/fm-10-1-96-115}.

\bibitem{Pap}
G. Pap,
\href{https://egres.elte.hu/tr/egres-06-17.pdf}{\emph{Mader matroids are gammoids}},
Egerv\'ary Research Group, Technical Report TR-2006-17, 2006.


\bibitem{Rado}
R. Rado,
\emph{A theorem on independence relations},
The Quarterly Journal of Mathematics, Oxford Series \textbf{13} (1942), 83--89.
\href{https://doi.org/10.1093/qmath/os-13.1.83}{doi:10.1093/qmath/os-13.1.83}.

\bibitem{SV} L. Segal, J. Verstrae\"{e}te, \emph{Logarithmic circumference in tough graphs}, Preprint (2026). 


\bibitem{SchrijverPaths}
A. Schrijver,
\href{https://homepages.cwi.nl/~lex/files/mader5.pdf}{\emph{A short proof of Mader's $\mathcal S$-paths theorem}},
Journal of Combinatorial Theory, Series B \textbf{82} (2001), no.~2, 319--321.


\bibitem{SchrijverMader}
A. Schrijver,
\href{https://homepages.cwi.nl/~lex/files/maderq.pdf}{\emph{Is each Mader matroid a gammoid?}},
unpublished note.

\bibitem{SchrijverMatroid}
A. Schrijver,
\emph{Combinatorial Optimization: Polyhedra and Efficiency},
Algorithms and Combinatorics, vol.~24, Springer, Berlin, 2003.

\bibitem{WangFractional}
Z. Wang,
\emph{Toughness Bounds for Fractional Hamiltonicity and Resistance Positivity},
preprint, \href{https://arxiv.org/abs/2609.03412}{arXiv:2609.03412} (2026).

\bibitem{Win}
S. Win,
\emph{On a connection between the existence of $k$-trees and the toughness of a graph},
Graphs and Combinatorics \textbf{5} (1989), 201--205.
\href{https://doi.org/10.1007/BF01788671}{doi:10.1007/BF01788671}.

\end{thebibliography}
\end{document}